\documentclass[11pt]{article}
\usepackage[utf8]{inputenc}
\usepackage{multirow}
\usepackage{tikz}
\usetikzlibrary{positioning}
\usepackage[a4paper, margin=2.3cm]{geometry}
\usepackage{amsmath,amssymb,amsthm}
\usepackage{xcolor}
\usepackage{thmtools}
\usepackage{thm-restate}

\usepackage{parskip}
\usepackage{float}
\usepackage[hidelinks,hyperfootnotes=false]{hyperref}
\usepackage{tikz-cd}
\usetikzlibrary{matrix,arrows.meta}

\usepackage{setspace}
\usepackage{makecell}

\usepackage{graphicx}
\usepackage{enumitem}
\usepackage{comment}

\newcommand{\F}{\mathbb{F}}

\usepackage{booktabs,tabularx,needspace}

\newtheorem{theorem}{Theorem}[section]

\newtheorem{corollary}[theorem]{Corollary}

\newtheorem{lemma}[theorem]{Lemma}
\newtheorem{question}[theorem]{Question}
\newtheorem{proposition}[theorem]{Proposition}

\newtheorem*{definition*}{Definition}
\newtheorem*{remark*}{Remark}

\makeatletter
\def\theHtheorem{\thesection.\arabic{theorem}}
\@for\ARevEnv:=remark,corollary,problem,lemma,question,proposition,definition,conjecture,example,claim,assumption\do{
  \expandafter\def\csname theH\ARevEnv\endcsname{\theHtheorem}}
\makeatother

\newcommand{\Fp}{\F_p}
\newcommand{\K}{\F_{p^2}}
\newcommand{\cG}{\mathcal G}
\newcommand{\cT}{\mathcal T}
\newcommand{\OO}{\operatorname O}
\newcommand{\fall}[2]{(#1)_{#2}}
\newcommand{\cinc}{c_{\mathrm{inc}}}

\hypersetup{
  pdftitle={Congruence Classes in Fp2: Sharp Results via an Energy Approach},
  pdfauthor={T. Pham, L. Quang-Ham, S. Senger, D. T. Tran, and B. Xue},
  pdfsubject={Finite-field distance geometry and congruence classes},
  pdfkeywords={finite fields, congruence classes, rigid motions, incidence geometry, distance energy}
}

\title{Congruence Classes in $\F_p^2$: Sharp Results via an Energy Approach}
\author{T. Pham \thanks{Institute of Mathematics and Interdisciplinary Sciences, Xidian University. \newline
\hspace*{0.45cm} Email: {\tt thangphammath@xidian.edu.cn}}
\and L. Quang-Ham \thanks{Vietnam Education Strategy and Policy Institute. ~Email: {\tt hamlq@vnies.edu.vn}}
\and S. Senger \thanks{Department of Mathematics, Missouri State University. Email: {\tt stevensenger@missouristate.edu}}
\and D. T. Tran \thanks{VNU University of Science, Hanoi, Vietnam. ~Email: {\tt tranthedung56@gmail.com}}
\and B. Xue \thanks{Institute of Mathematical Sciences, ShanghaiTech University. ~Email: {\tt xuebq@shanghaitech.edu.cn}}}
\date{}

\begin{document}

\maketitle

\begin{abstract}
Let $\cT_k(E)$ denote the set of congruence classes of ordered
$k$-tuples of pairwise distinct points of $E$. Let
$p\equiv3\pmod4$ be prime. For $E\subset\Fp^2$ with
$3\leq|E|\leq p^{3/4}$, we prove that
$|\cT_3(E)|\gg|E|^{11/6}$; for $4\leq|E|\leq p^{3/4}$, we prove that $|\cT_4(E)|\gg|E|^3/\log|E|$. For every fixed $k\geq5$ and
$k\leq|E|\leq p^{3(k-2)/(3k-4)}$, we prove that
$|\cT_k(E)|\gg_k|E|^{k-1}$. The proofs proceed by bounding the moments of the overlap function of rigid motions. The main inputs are an exact identity involving the distance energy and an incidence bound obtained by viewing rigid
motions as lines over $\Fp(i)$. The bound for $k=4$ is sharp up to a logarithmic factor, while the bounds for $k\geq5$ are sharp up to constants.

\end{abstract}

\noindent\textbf{Keywords.} Finite fields, congruence classes, triangles, rigid motions, point--line incidences, distance energy,
$k$-point configurations.

\noindent\textbf{2020 Mathematics Subject Classification.} 52C10, 05D99.

\section{Introduction}
Let $p$ be an odd prime.  For $x=(x_1,x_2)\in\Fp^2$, write
\[
  \|x\|=x_1^2+x_2^2,
  \qquad d(x,y)=\|x-y\|.
\]
We refer to $d(x,y)$ as the distance between $x$ and $y$.
Two ordered $k$-tuples of points are congruent if there exists an element of the full isometry group $\cG=\OO(2,\Fp)\ltimes\Fp^2$ that sends the entries of one to the corresponding entries of the other. For $E\subset\Fp^2$ and $k\geq2$, let $\cT_k(E)$ denote the set of congruence classes of ordered $k$-tuples of pairwise distinct points of $E$; collinear tuples are included. Throughout this paper, we write $X\ll Y$, or equivalently $Y\gg X$, if $X\leq cY$ for an absolute constant $c>0$, and $X\asymp Y$ if $X\ll Y\ll X$. A subscript, as in $\ll_C$ or $\gg_C$, indicates dependence only on the parameter $C$. All logarithms are natural.

For an integer $k\geq2$, a central problem in finite-field distance
geometry is to determine how many congruence classes of ordered
$k$-point configurations must be determined by a finite set
$E\subseteq\Fp^2$. This problem has two complementary forms. In the dense regime, one asks how large $E$ must be in order that $\cT_k(E)$ contain a positive proportion of all $k$-point congruence classes realizable in the plane. In the sparse regime, where such a positive-proportion conclusion may be impossible, one seeks strong lower bounds for $|\cT_k(E)|$ in terms of $|E|$.

The case $k=2$ is the finite-field Erd\H{o}s--Falconer distance
problem. Representative results include the sum--product approach of Bourgain, Katz, and Tao \cite{BKT}, the Fourier-analytic framework of Iosevich and Rudnev \cite{IR07}, pinned-distance estimates \cite{CEHIK,MPPRS}, and recent advances in even dimensions \cite{PhamShenXue}.
The present paper is concerned with the case $k\geq3$.
Here the study of a $k$-point configuration involves the simultaneous behavior of its $\binom{k}{2}$ pairwise distances. We begin with triangles ($k=3$)
and then turn to general $k$.

For triangles in the dense regime, when $p\equiv3\pmod4$, recall that $\Fp^2$ contains
$\asymp p^3$ triangle congruence classes. The problem is therefore to find conditions guaranteeing
$|\cT_3(E)|\gg p^3$. For
$p\equiv 3\pmod 4$, Bennett, Iosevich, and Pakianathan \cite{BIP} established
this result under the assumption $|E|\gg p^{7/4}$ using rigid motions and
incidence geometry. Bennett, Hart, Iosevich, Pakianathan, and Rudnev
\cite{BHIPR} subsequently improved the threshold to $|E|\geq Cp^{8/5}$ as part
of a group-action approach to counting simplices. At this same threshold,
Pham \cite{PhamFixed} showed, again for $p\equiv 3\pmod 4$, that for every $r\in\Fp^\times$, $E$ determines $\gg p^2$ triangle classes having a side of length $r$. In the same paper \cite[Theorem~1.6]{PhamFixed}, he proved more
generally that the condition $|E|\gg p^{2-2/(2k-1)}$ guarantees a positive
proportion of the $\asymp p^{2k-3}$ congruence classes of
$k$-point configurations in the plane for every
$k\geq 3$. This improves an earlier result due to McDonald in \cite{McDonaldLarge}. When $k=3$, this recovers the
exponent $8/5$ of \cite{BHIPR}. 

For small sets, however, a positive proportion is out of reach: $E$ determines at most $|E|^3$ triangles, which may be far smaller than $p^3$. In this regime, the natural goal is a lower bound on $|\cT_3(E)|$ in terms of $|E|$ itself. This is the direction pursued in the present paper.

Our starting point is the following sharp bound in terms of distances. Write
\[
  \Delta(E)=\{d(x,y):x,y\in E\},
  \qquad D=|\Delta(E)\setminus\{0\}|.
\]
For every odd prime $p$ and every $E\subset\Fp^2$ with $|E|\geq3$, one has
\begin{equation}\label{eq:intro-elementary}
  |\cT_3(E)|\gg |E|D.
\end{equation}
Indeed, for each $r\in\Delta(E)\setminus\{0\}$, fix an ordered pair
$(a_r,b_r)\in E^2$ with $d(a_r,b_r)=r$. An isometry fixing both endpoints of a segment of nonzero length is either the identity or the reflection in the line through them. Consequently, as $x$ ranges over $E\setminus\{a_r,b_r\}$,
the triples $(a_r,b_r,x)$ represent at least $(|E|-2)/2$ congruence classes. Since the first side of each such triple has length $r$, distinct values of $r$ yield disjoint collections of classes, and \eqref{eq:intro-elementary} follows.

The estimate \eqref{eq:intro-elementary} is sharp in general. For example, let
\begin{equation}\label{eq:progression}
  E=\{(j,0):0\leq j<n\},\qquad 2n<p.
\end{equation}
Then $D=n-1$. Indeed, the nonzero distances are $m^2$ for
$1\leq m<n$, and they are distinct because $2n<p$. To bound the
number of classes, write an ordered triple as
$((a_1,0),(a_2,0),(a_3,0))$. The translation
$z\mapsto z-(a_1,0)$ sends it to the congruent triple
$((0,0),(h_2,0),(h_3,0))$, where
$h_2=a_2-a_1$ and $h_3=a_3-a_1$ both lie in
$\{-(n-1),\dots,n-1\}$. Thus every class determined by $E$ has a
representative among at most $(2n-1)^2$ such normalized triples, and $|\cT_3(E)|\leq(2n-1)^2\ll |E|D$.

For comparison, we recall the situation in the real plane. For a finite set $E\subset\mathbb{R}^2$, the distinct distances theorem of Guth and Katz \cite{GK}, combined with the argument above, gives $\gg |E|^2/\log|E|$ triangle classes. By applying the Guth--Katz incidence theorem directly to count pairs of congruent triangles, Rudnev \cite{RudnevTriangles} obtained
the sharp bound $\gg |E|^2$ under the assumption that no line contains more than a fixed proportion of the points. The same bound for arbitrary sets, with collinear triples included, follows by combining this result with the elementary estimate applied to a line containing a large proportion of the
points. Mansfield and Passant \cite{MansfieldPassant} subsequently studied the structure of sets determining only $O(|E|^2)$ triangle classes.

Returning to $\Fp^2$, the pinned distances theorem of Murphy, Petridis, Pham, Rudnev, and Stevens \cite[Theorem~3]{MPPRS} implies that $D\gg |E|^{2/3}$ whenever $|E|\leq p^{4/3}$ and $p\equiv 3\pmod 4$. Substituting this into
\eqref{eq:intro-elementary} yields
\begin{equation}\label{eq:intro-distance-consequence}
  |\cT_3(E)|\gg |E|^{5/3}.
\end{equation}
In this paper, we develop a framework that improves this bound and extends to configurations of $k$ points for general $k$.

\subsection{Main results}\label{subsec:main}

All our main results assume $p\equiv 3\pmod 4$, so that $d(x,y)=0$ only when $x=y$. 
This assumption is essential for all the cardinality bounds below;
see the discussion of the split case.

Our main estimate is expressed in terms of the distance energy, which records the multiplicities of distances rather than their number. For $r\in\Fp^\times$, define
\[
  \nu_E(r)=|\{(x,y)\in E^2:x\ne y,\ d(x,y)=r\}|,
  \qquad Q(E)=\sum_{r\in\Fp^\times}\nu_E(r)^2.
\]
Thus $Q(E)$ counts pairs of ordered segments, each with distinct endpoints, that have the same length.

\begin{theorem}\label{thm:adaptive}
Let $C\geq1$ be fixed. Let $p\equiv3\pmod4$ be prime, and let
$E\subset\Fp^2$ with $|E|\geq3$. Suppose that
\begin{equation}\label{eq:energy-hypothesis}
  Q(E)\leq Cp|E|^2.
\end{equation}
Then, with $D=|\Delta(E)\setminus\{0\}|$,
\[
  |\cT_3(E)|
  \gg_{C}
  \max\left\{|E|D,\ \frac{|E|^{7/2}}{Q(E)^{1/2}}\right\}.
\]
\end{theorem}

The first term is \eqref{eq:intro-elementary}; 
the second is our new contribution. Note that
the hypothesis \eqref{eq:energy-hypothesis} concerns the energy rather than
the cardinality of $E$. The isosceles triangle
estimate of Murphy, Petridis, Pham, Rudnev, and Stevens
\cite[Theorem~4]{MPPRS} gives $Q(E)\ll |E|^{10/3}$ for $|E|\leq p^{4/3}$;
see Lemma~\ref{lem:distance-energy-bound}. For $|E|\leq p^{3/4}$, this
verifies \eqref{eq:energy-hypothesis} with an absolute constant $C$, and
yields the following consequence.

\begin{corollary}\label{cor:eleven-sixths}
Let $p\equiv3\pmod4$ be prime, and let $E\subset\Fp^2$ satisfy
$3\leq |E|\leq p^{3/4}$. Then
\[
  |\cT_3(E)|\gg |E|^{11/6},
\]
with an absolute implied constant.
\end{corollary}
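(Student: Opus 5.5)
The plan is to derive the corollary directly from Theorem~\ref{thm:adaptive}, using the isosceles-triangle energy bound of Murphy, Petridis, Pham, Rudnev, and Stevens (Lemma~\ref{lem:distance-energy-bound}): for $|E|\leq p^{4/3}$ one has $Q(E)\leq c_0|E|^{10/3}$ with an absolute constant $c_0$. Our range $|E|\leq p^{3/4}$ lies inside $|E|\leq p^{4/3}$, so this bound is available.

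First I will verify the energy hypothesis \eqref{eq:energy-hypothesis} with an absolute $C$. From $|E|\leq p^{3/4}$ we get $|E|^{4/3}\leq p$, and hence
\[
Q(E)\leq c_0|E|^{10/3}=c_0|E|^{4/3}|E|^2\leq c_0\,p|E|^2 .
\]
So \eqref{eq:energy-hypothesis} holds with $C=\max\{1,c_0\}$. This constant is absolute, and therefore the implied constant in Theorem~\ref{thm:adaptive}, which depends only on $C$, is absolute as well.

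Next I apply Theorem~\ref{thm:adaptive} and keep only the second term of the maximum:
\[
|\cT_3(E)|\gg\frac{|E|^{7/2}}{Q(E)^{1/2}}\geq\frac{|E|^{7/2}}{c_0^{1/2}|E|^{5/3}}\gg|E|^{7/2-5/3}=|E|^{11/6}.
\]
Since $7/2-5/3=21/6-10/6=11/6$, this is exactly the claimed bound, with an absolute implied constant.

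There is no genuine obstacle here. The only care needed is to check that Lemma~\ref{lem:distance-energy-bound} holds with an absolute constant throughout the stated range, and that it bounds the same quantity $Q(E)$, restricted to nonzero distances. Both are as stated in the paper.
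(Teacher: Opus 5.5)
Your proposal is correct and matches the paper's proof step for step. The paper likewise takes the absolute constant from Lemma~\ref{lem:distance-energy-bound} and uses $|E|^{4/3}\leq p$ to verify \eqref{eq:energy-hypothesis} with $C=\max\{1,C_Q\}$. It then reads off $|E|^{7/2}Q(E)^{-1/2}\gg|E|^{11/6}$ from Theorem~\ref{thm:adaptive}.
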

Combining this corollary with \eqref{eq:intro-elementary}, we obtain
\begin{equation}\label{eq:intro-combined-cardinality}
  |\cT_3(E)|\gg\max\{|E|D,\ |E|^{11/6}\}.
\end{equation}
The two terms coincide when $D=|E|^{5/6}$, and the second term in
\eqref{eq:intro-combined-cardinality} improves the exponent $5/3$ in \eqref{eq:intro-distance-consequence} to $11/6$.

The progression example \eqref{eq:progression} shows that the energy term in Theorem~\ref{thm:adaptive} is sharp, up to constants, in the regime
$Q(E)\asymp|E|^3$. For each $1\leq m<n$, the distance $m^2$ is
realized by exactly $2(n-m)$ ordered pairs. Hence
\[
  Q(E)=4\sum_{m=1}^{n-1}(n-m)^2
       =\frac23\,n(n-1)(2n-1)\asymp |E|^3,
\]
so the hypothesis \eqref{eq:energy-hypothesis} holds with an absolute constant, and the energy term is
$|E|^{7/2}Q(E)^{-1/2}\asymp|E|^2$. This matches the true value
$|\cT_3(E)|\asymp|E|^2$. (For this set the term $|E|D$ is
also of order $|E|^2$.)

This example does not, however, show that the exponent $11/6$ in
Corollary~\ref{cor:eleven-sixths} is optimal: the progression has
energy $|E|^3$, whereas the exponent $11/6$ reflects a worse case
$Q(E)\asymp|E|^{10/3}$ allowed by Lemma~\ref{lem:distance-energy-bound}.
We do not know a set attaining both $Q(E)\asymp|E|^{10/3}$ and
$|\cT_3(E)|\asymp|E|^{11/6}$. 

The energy hypothesis \eqref{eq:energy-hypothesis} can be removed
altogether, at the cost of a second term.

\begin{theorem}\label{thm:unconditional}
Let $p\equiv3\pmod4$ be prime, and let $E\subset\Fp^2$ with
$|E|\geq3$. Then
\[
  |\cT_3(E)|
  \gg\min\left\{\frac{|E|^{7/2}}{Q(E)^{1/2}},\
  \frac{p|E|^{11/2}}{Q(E)^{3/2}}\right\}.
\]
\end{theorem}

The ratio of the second term to the first is $p|E|^2/Q(E)$, so the first term is the smaller one exactly when $Q(E)\leq p|E|^2$. Consequently, under \eqref{eq:energy-hypothesis} the minimum is at least $C^{-1}$ times the first term, and Theorem~\ref{thm:adaptive} follows from Theorem~\ref{thm:unconditional} together with
\eqref{eq:intro-elementary}.  

Our next result concerns configurations of more than three points.
For the progression example \eqref{eq:progression}, 
write an ordered $k$-tuple as $((a_1,0),\ldots,(a_k,0))$. Translation by $-(a_1,0)$ gives a
congruent representative $((0,0),(h_2,0),\ldots,(h_k,0))$, where each $h_j=a_j-a_1$ has at most $2|E|-1$ possible values. Consequently,
$|\cT_k(E)|\leq(2|E|-1)^{k-1}<(2|E|)^{k-1}$. Hence no lower bound
valid for all sets can exceed the order $|E|^{k-1}$. The method of
Theorem~\ref{thm:adaptive} attains this order as soon as $k\geq5$.

\begin{theorem}\label{thm:k-points}
Let $p\equiv3\pmod4$ be prime, and let $E\subset\Fp^2$.
If $4\leq|E|\leq p^{3/4}$, then
\[
  |\cT_4(E)|\gg\frac{|E|^3}{\log|E|}.
\]
For every fixed $k\geq5$, if
\[
  k\leq|E|\leq p^{3(k-2)/(3k-4)},
\]
then
\[
  |\cT_k(E)|\gg_k|E|^{k-1}.
\]
The latter bound is sharp up to the implied constant.
\end{theorem}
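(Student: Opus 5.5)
The plan is to prove the bound through the overlap function of rigid motions and a Cauchy--Schwarz argument. For $g\in\cG$ put $f(g)=|E\cap gE|$. A $k$-tuple of pairwise distinct points has stabilizer in $\cG$ of size at most $2$: it is trivial if the tuple is non-collinear, and is the reflection in the line otherwise, since $p\equiv 3\pmod 4$ means distinct points have nonzero distance. Consequently the number $N_k$ of ordered pairs of congruent $k$-tuples from $E$ satisfies
\[
N_k\le 2\sum_{g\in\cG}\fall{f(g)}{k}.
\]
Cauchy--Schwarz over the classes then gives
\[
\fall{|E|}{k}^2\le |\cT_k(E)|\,N_k .
\]
So it suffices to prove
\[
\sum_{g}\fall{f(g)}{k}\ll_k |E|^{k+1}\quad (k\ge 5),
\qquad
\sum_g \fall{f(g)}{4}\ll |E|^5\log|E| .
\]
Since $\fall{f}{k}=0$ when $f<k$, only motions with $f(g)\ge k\ge 4$ matter. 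I would split these dyadically according to the size of $f(g)\in[t,2t)$, with $t$ ranging from $k$ up to $|E|$.

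\emph{Small overlaps.} For $t\le T$ I would use the exact energy identity. Each ordered pair of equal-length segments $(x,y),(x',y')$ is matched by exactly two motions $g$ with $gx=x'$ and $gy=y'$. Hence
\[
\sum_g \fall{f(g)}{2}=2Q(E)+(\text{diagonal terms}),
\]
and Lemma~\ref{lem:distance-energy-bound} gives $Q(E)\ll|E|^{10/3}$ for $|E|\le p^{4/3}$. Therefore
\[
\sum_{k\le f(g)\le T}\fall{f(g)}{k}\ll T^{k-2}|E|^{10/3},
\]
which is $\ll|E|^{k+1}$ as long as $T\le |E|^{(3k-7)/(3k-6)}$.

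\emph{Large overlaps.} Identify $\Fp^2$ with $\K=\Fp(i)$, which is a field because $p\equiv3\pmod4$. Direct isometries become $z\mapsto az+b$ with $a^{p+1}=1$, and reflections become $z\mapsto a\bar z+b$. For fixed $(x,y)\in E^2$, the motions with $gx=y$ form the set $b=y-ax$. This is a line in the $(a,b)$-plane over $\K$ (intersected with the norm-one curve), and $f(g)$ is the number of these lines through the point $g$. Thus large values of $f$ correspond to rich points of an arrangement of $|E|^2$ lines over $\K$. I would apply the point--line incidence bound over $\K$ stated earlier (the Stevens--de Zeeuw type bound) to obtain a rich-motion estimate of the shape
\[
|\{g: f(g)\ge t\}|\ll \frac{|E|^4}{t^3}
\]
for $t\ge T$, valid when $|E|$ is small compared with a power of $p$. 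Summing gives a contribution $\ll |E|^4 t^{k-3}$ from each dyadic scale $t\le|E|$.
\begin{itemize}
\item For $k\ge5$ this is a geometric series dominated by $t\asymp|E|$, giving $|E|^{k+1}$.
\item For $k=4$ every one of the $\ll\log|E|$ scales contributes $|E|^5$, which is exactly the source of the logarithm.
\end{itemize}
The range $|E|\le p^{3(k-2)/(3k-4)}$ should come from matching the threshold $T$ at which the incidence bound becomes valid, which depends on $p$ and $|E|$, with the energy cutoff $|E|^{(3k-7)/(3k-6)}$. For $k=4$ the requirement $|E|\le p^{3/4}$ plays the same role.

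The main obstacle is the large-overlap step: proving the $|E|^4/t^3$ rich-motion bound over $\K$ with the correct constraint on $p$. Two difficulties arise there. The lines are restricted to $a$ on the norm-one subgroup, and the finite-field incidence bound only holds below a size threshold in $p$. I would first handle direct motions and reflections separately, since each is a $(p+1)$-parameter family of lines. I would then check that the incidence theorem's size hypothesis is met exactly when $t\ge T$ and $|E|\le p^{3(k-2)/(3k-4)}$, and tune $T$ at the junction of the two regimes.

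Sharpness for $k\ge5$ is already shown by the progression example \eqref{eq:progression}, which gives $|\cT_k(E)|<(2|E|)^{k-1}$.
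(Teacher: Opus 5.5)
Your overall architecture matches the paper's: orbit-energy Cauchy--Schwarz, then the exact second moment together with $Q(E)\ll|E|^{10/3}$ below a cutoff, then dyadic incidence bounds above it. However, the large-overlap step, which you correctly flag as the crux, is misstated, and the mechanism that makes it work is missing.

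\textbf{The claimed rich-motion bound is not available.} The bound $|\{g:f(g)\ge t\}|\ll|E|^4/t^3$ is a Szemer\'edi--Trotter bound for $|E|^2$ lines, and it does not follow from Theorem~\ref{thm:sdz}.
\begin{itemize}
\item In your formulation the motions are the rich \emph{points} of the arrangement $b=y-ax$. That point set is not a Cartesian product, so the theorem does not apply to it.
\item The theorem does apply after dualizing. The pair $(x,y)$ lies on the graph line $w=az+b$, so the point set becomes $E\times E$ (or $\bar E\times E$ for reflections), and the lines are the graphs of the $t$-rich motions.
\item The result is $tR\ll|E|^{5/4}R^{3/4}+R$, i.e.\ $R_t^\pm\ll|E|^5/t^4$. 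This is weaker than your claim by a factor $|E|/t$.
\item With this bound, scale $t$ contributes $|E|^5t^{k-4}$. That is exactly what gives $|E|^{k+1}$ for $k\ge5$ and the factor $\log|E|$ for $k=4$.
\end{itemize}
Your own $k=4$ accounting, where every scale contributes $|E|^5$, is inconsistent with your stated $|E|^4/t^3$. That bound would give $|E|^4t$ per scale and no logarithm. The norm-one restriction is not an obstacle: it only makes the line family a subfamily, and distinct motions give distinct lines.

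\textbf{The characteristic hypothesis is circular as you set it up.} The condition $|E|\,|\mathcal L|\le\cinc p^2$ involves the very number of rich motions you are trying to bound. So ``checking it for $t\ge T$'' needs an a priori device, which you do not supply. The paper's device is Lemma~\ref{lem:subfamily-cutoff}:
\begin{itemize}
\item Suppose $|E|R_t^\pm>\cinc p^2$. Apply the theorem to a subfamily of $M\asymp p^2/|E|$ of these lines, which satisfies both hypotheses once $|E|\le c_0p$.
\item This gives $t\ll|E|^{5/4}M^{-1/4}\asymp|E|^{3/2}p^{-1/2}$.
\item Hence the incidence bound is valid for all $t\ge T\asymp\max\{1,|E|^{3/2}p^{-1/2}\}$.
\end{itemize}
For $k=4$ the paper alternatively uses $R_t^\pm\le2Q(E)/t^2$ from the second moment, which verifies the hypothesis for $t\gtrsim(|E|Q(E))^{1/2}/p$.

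With this $T$, your matching heuristic does work. Up to constants, $|E|^{3/2}p^{-1/2}\le|E|^{(3k-7)/(3k-6)}$ is equivalent to $|E|\le p^{3(k-2)/(3k-4)}$, which is $p^{3/4}$ when $k=4$. As written, though, the key step asserts a bound that is not available and lacks the argument that makes the incidence theorem applicable.

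Two minor points are harmless but unnecessary. The factor $2$ in $N_k\le2\sum_g\fall{f(g)}{k}$ can be dropped, since $N_k\le\sum_g\fall{f(g)}{k}$ directly. There are no ``diagonal terms'', since $\sum_g\fall{f(g)}{2}=2Q(E)$ exactly.
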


\subsection{Main ideas of the proof}

A motion $g\in\cG$ moves $E$ to the congruent set $g(E)$, and we
measure how much the two sets overlap by
\[
  m_g=|E\cap g^{-1}(E)|=|\{x\in E: g(x)\in E\}| ,
\]
the number of points of $E$ that $g$ sends to points of $E$. We call $m_g$ the overlap of $g$; it is large when $g$ maps many points of $E$ onto $E$. Write $(m)_j=m(m-1)\cdots(m-j+1)$. If two ordered $k$-tuples of distinct points of $E$ are congruent, then some motion $g$ takes the first to the second, and the second is determined by the first and $g$; in particular $g$ has overlap at least $k$. A Cauchy--Schwarz argument (Lemma~\ref{lem:orbit-energy}) therefore
gives, for every $k\geq2$,
\begin{equation}\label{eq:intro-orbit}
  |\cT_k(E)|\geq\frac{(|E|)_k^2}{\sum\limits_{g\in\cG}(m_g)_k}.
\end{equation}
All our results are obtained by bounding the moments
$\sum_g(m_g)_k$ from above, using two pieces of information about
the overlap function $g\mapsto m_g$: an exact formula for its second moment, and an incidence bound for the number of motions whose
overlap is large.

\emph{The second moment.} Since $p\equiv3\pmod4$, the form $x^2+y^2$ is anisotropic, so any two distinct points of $E$ are at nonzero distance. Two ordered segments of the same nonzero length are related by exactly one direct and one reverse motion. Write $\cG^+$ and $\cG^-$ for the direct and reverse motions, respectively. Hence
(Lemma~\ref{lem:second-moment})
\begin{equation}\label{eq:intro-second-moment}
  \sum_{g\in\cG^+}(m_g)_2
  =\sum_{g\in\cG^-}(m_g)_2=Q(E),
  \qquad \sum_{g\in\cG}(m_g)_2=2Q(E).
\end{equation}
This identity controls motions of moderate overlap. Used alone, it gives $\sum_g(m_g)_k\leq2|E|^{k-2}Q(E)$; for triangles, this only recovers the distance-based exponent $5/3$. A separate estimate is
therefore needed for motions with large overlap.

\emph{Extension-field graph lines and Cartesian-product incidences.}
To count the motions with large overlap, we
identify $\Fp^2$ with $\K=\Fp(i)$, where $i^2=-1$, and write
$\bar z=z^p$. By Lemma~\ref{lem:isometries}, every direct motion is of the form $g(z)=uz+v$ with $u\bar u=1$. To such $g$ we associate the line
\[
  \ell_g=\{(z,w)\in\K^2:w=uz+v\}.
\]
A point $(x,y)\in E\times E$ lies on $\ell_g$ exactly when $y=g(x)$,
so
\[
  |\ell_g\cap(E\times E)|=m_g.
\]
Reverse motions $g(z)=u\bar z+v$ are handled in the same way with the point set $\bar E\times E$. This construction is related to, but different from, the motion--incidence approaches of Elekes and Sharir \cite{ElekesSharir}, Guth and Katz
\cite[Proposition~2.7]{GK}, and Bennett, Iosevich, and Pakianathan
\cite[Lemma~2.2]{BIP}. The key point is that, for every
$E$, the incidence points form the Cartesian products $E\times E$
and $\bar E\times E$. We can therefore use the Cartesian-product
incidence theorem of Stevens and de Zeeuw
\cite[Theorem~4]{StevensDeZeeuw}. Applying it to graph lines with
overlap at least $t$, we obtain (Lemma~\ref{lem:rich-motions})
\begin{equation}\label{eq:intro-rich}
  R_t:=|\{g\in\cG:m_g\geq t\}|\ll\frac{|E|^5}{t^4},
  \qquad t\geq t_0,
\end{equation}
provided that the characteristic condition holds separately for direct and reverse motions.

\emph{Cutoffs and overlap moments.} Write $N=|E|$ and split the motions at an overlap threshold $K$. The exact second moment bounds the contribution below $K$ by $O(K^{k-2}Q(E))$. Above $K$, a dyadic sum of the $t^{-4}$ bound in \eqref{eq:intro-rich} contributes $O(N^5/K)$ for $k=3$, $O(N^5\log N)$ for $k=4$, and
$O_k(N^{k+1})$ for $k\geq5$. Suitable choices of $K$ give
\begin{equation}\label{eq:intro-moments}
  \sum_{g\in\cG}(m_g)_3\ll_C N^{5/2}Q(E)^{1/2},
  \qquad
  \sum_{g\in\cG}(m_g)_4\ll N^{5}\log N,
  \qquad
  \sum_{g\in\cG}(m_g)_k\ll_k N^{k+1}\ \ (k\geq5).
\end{equation}
The first estimate holds when $Q(E)\leq CpN^2$, the second when $N\leq p^{3/4}$, and the third, for fixed $k\geq5$, when
$N\leq p^{3(k-2)/(3k-4)}$. The last range follows from a subfamily
argument that makes the incidence estimate available above
$K\asymp\max\{1,N^{3/2}p^{-1/2}\}$. Substitution into
\eqref{eq:intro-orbit} gives the stated bounds.

\emph{The role of $p\equiv3\pmod4$.} The assumption is used twice.
First, it makes $x^2+y^2$ anisotropic, which is what allows the
nonzero distance energy to control the second moment in
\eqref{eq:intro-second-moment}. Second, it makes $\Fp[X]/(X^2+1)$ a field, so that motions can be written as lines over $\K$. Note that the incidence theorem is applied over $\K$, a field with $p^2$ elements but of characteristic $p$; its hypothesis involves the characteristic. Thus the conditions to verify are
$|E|R_t^\pm\leq\cinc p^2$, rather than conditions with $p^4$.

\emph{The split case.}
When $p\equiv1\pmod4$, the plane contains an isotropic line
$L$: every pair of points of $L$ is at distance zero. The restrictions to $L$ of the isometries preserving $L$ form the full group of invertible affine maps. After identifying $L$ with $\Fp$, every ordered $k$-tuple of distinct points of $L$ is therefore congruent to one of the form $(0,1,\zeta_3,\ldots,\zeta_k)$, and for every $E\subset L$ we have
\[
  |\cT_k(E)|\leq p^{k-2}.
\]
Taking $|E|=\lfloor p^{3/4}\rfloor$ along primes
$p\equiv1\pmod4$ gives $|\cT_3(E)|=O(|E|^{4/3})=o(|E|^{11/6})$ and
$|\cT_4(E)|=O(|E|^{8/3})=o(|E|^3/\log|E|)$. Hence the triangle and
four-point conclusions fail in the split case. For each fixed
$k\geq5$, taking $|E|\asymp p^{3(k-2)/(3k-4)}$ gives
$p^{k-2}=o(|E|^{k-1})$, so the higher-point conclusion also fails. There is also a corresponding obstruction to the method. When $p\equiv1\pmod4$, the algebra $\Fp[X]/(X^2+1)$ splits as
$\Fp\times\Fp$ and is not a field, so the graphs of rigid motions are no longer lines over a field and the Stevens--de Zeeuw point--line
incidence theorem used above does not apply.

\section{Preliminaries}\label{sec:preliminaries}

For a real number $a$ and an integer $j\geq0$, write the falling factorial as
\[
  \fall{a}{j}=a(a-1)\cdots(a-j+1),
  \qquad \fall{a}{0}=1.
\]

\subsection{Rigid motions and overlaps}

Assume that $p\equiv3\pmod4$.  Let $\K=\Fp(i)$, where $i^2=-1$,
and identify $(x_1,x_2)\in\Fp^2$ with $z=x_1+ix_2\in\K$.
Conjugation in $\K$ is given by $\bar z=z^p=x_1-ix_2$, and
\[
  z\bar z=x_1^2+x_2^2.
\]
In particular, the quadratic form $x_1^2+x_2^2$ is anisotropic in that it
vanishes only at $(0,0)$.

Every affine isometry has exactly one of the two forms
\begin{equation}\label{eq:motion-forms}
  g^+_{u,v}(z)=uz+v,
  \qquad
  g^-_{u,v}(z)=u\bar z+v,
  \qquad
  u\bar u=1,\qquad v\in\K.
\end{equation}
Since the incidence method rests on this parametrization, we include
the short proof.

\begin{lemma}\label{lem:isometries}
Let $p\equiv3\pmod4$. Every map of one of the forms
\eqref{eq:motion-forms} is an affine isometry of $\Fp^2$, every
affine isometry is of exactly one of these forms, and distinct pairs
$(u,v)$ give distinct maps. In particular $|\cG|=2p^2(p+1)$.
\end{lemma}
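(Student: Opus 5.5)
We must prove Lemma~\ref{lem:isometries}. An affine isometry is a map $z\mapsto A z+b$ with $A\in\OO(2,\Fp)$, meaning $\|Ax\|=\|x\|$ for all $x$, and $b\in\Fp^2$. The plan has three parts: check that the listed maps are isometries, show that every isometry has one of the two forms, and then establish uniqueness and count.

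\emph{The listed maps are isometries.} Write $x-y$ as $z-w$ in $\K$. For $g^+_{u,v}$ the difference of images is $u(z-w)$. Its norm is
\[
u\bar u\,(z-w)\overline{(z-w)}=(z-w)\overline{(z-w)}=d(x,y).
\]
For $g^-_{u,v}$ the difference is $u\overline{(z-w)}$, and the same computation applies. We also note that $z\mapsto\bar z$ is $\Fp$-linear, so both maps are $\Fp$-affine.

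\emph{Every isometry has one of the two forms.} After subtracting $g(0)$, it suffices to treat an $\Fp$-linear $A$ preserving $N(z)=z\bar z$.

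Every $\Fp$-linear map $\K\to\K$ can be written as $z\mapsto az+b\bar z$ with $a,b\in\K$. This holds because the $\K$-span of $\{z,\bar z\}$ consists of $\Fp$-linear maps, the dimensions match ($4$ over $\Fp$), and $z,\bar z$ are $\K$-linearly independent since $i\ne\bar i$.

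Next we expand $N(az+b\bar z)=N(z)$:
\[
(a\bar a+b\bar b)z\bar z+a\bar b z^2+\bar a b\bar z^2=z\bar z \quad\text{for all } z.
\]
Both sides are polynomial expressions in the $\Fp$-coordinates of degree $2$. Comparing them as quadratic forms, we use the independence of $z\bar z$, $z^2$, $\bar z^2$ as $\K$-valued quadratic forms on $\Fp^2$; this can be checked by evaluating at $z=1$, $z=i$ and $z=1+i$. It follows that $a\bar b=0$ and $a\bar a+b\bar b=1$. Hence either $b=0$ and $a\bar a=1$, or $a=0$ and $b\bar b=1$. These are exactly the two forms.

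This step is the main point. An alternative route is to use polarization to show that $A$ preserves the bilinear form $\mathrm{Re}(z\bar w)$ and then argue directly. I expect the coefficient comparison above to be cleanest, with care needed only for the linear independence claim (a small $3\times3$ determinant).

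\emph{Uniqueness and counting.} Distinct pairs $(u,v)$ give distinct maps. Within a single form this is clear: evaluating at $0$ gives $v$, and evaluating at $1$ then gives $u$.

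The two forms cannot coincide. If $uz+v=u'\bar z+v'$ for all $z$, then $v=v'$ and $uz=u'\bar z$. Setting $z=1$ gives $u=u'$. Setting $z=i$ gives $ui=-u'i$, so $u=-u$ and hence $u=0$, which contradicts $u\bar u=1$.

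For the count, the norm map $\K^\times\to\Fp^\times$, $u\mapsto u^{p+1}$, is surjective, since $\K^\times$ is cyclic of order $p^2-1$. Its kernel therefore has $(p^2-1)/(p-1)=p+1$ elements. With $p^2$ choices of $v$ and two forms, we get
\[
|\cG|=2p^2(p+1).
\]
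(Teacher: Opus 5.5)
Your proof is correct. It differs from the paper's only in how you classify the linear part $h$.

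\emph{The paper's route.} It works directly with the two values $u=h(1)$ and $\xi=h(i)$. Polarization gives $u\bar\xi+\bar u\xi=0$, so $\eta=\xi\bar u$ is a trace-zero element of norm one. Hence $\eta=\pm i$, so $\xi=\pm iu$, and this forces $h(z)=uz$ or $h(z)=u\bar z$. This is the polarization route you mention as an alternative.

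\emph{Your route.} You use the decomposition of $\Fp$-linear maps $\K\to\K$ as $z\mapsto az+b\bar z$. You then expand the norm and read off $a\bar b=0$ and $a\bar a+b\bar b=1$ from the linear independence of $z\bar z$, $z^2$, $\bar z^2$. That independence does hold. Evaluating at $z=1,i,1+i$ gives rows $(1,1,1)$, $(1,-1,-1)$, $(2,2i,-2i)$, whose determinant is $8i\neq0$ for odd $p$. You should record this value rather than leave it as ``a small determinant''.

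\emph{Comparison.} Your approach needs one extra structural fact, the description of all $\Fp$-linear endomorphisms of $\K$, plus a determinant check. In return, both branches of the dichotomy come out symmetrically from the single identity $a\bar b=0$. The paper's approach is more hands-on: it uses only two evaluations and the polar form, and never needs to describe all linear maps.

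Your verification that the listed maps are isometries, your uniqueness argument (including the $z=i$ test separating the two forms), and your count via the surjective norm map all coincide with the paper's.
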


\begin{proof}
We first verify that the maps in \eqref{eq:motion-forms} are
isometries. If $u\bar u=1$, then
\[
  \|uz\|=(uz)\overline{uz}
  =u\bar u\,z\bar z=\|z\|,
  \qquad
  \|u\bar z\|=(u\bar z)\overline{u\bar z}
  =u\bar u\,z\bar z=\|z\|.
\]
Thus the linear maps $z\mapsto uz$ and $z\mapsto u\bar z$
preserve $\|\cdot\|$. Adding a translation does not change
differences, so both affine maps in \eqref{eq:motion-forms} preserve
$d$.

Conversely, let $g$ be an affine isometry, put $v=g(0)$, and
define $h(z)=g(z)-v$. Then $h$ is $\Fp$-linear, fixes the origin, and preserves $\|\cdot\|$. Set $u=h(1)$ and $\xi=h(i)$. Since
$\|1\|=\|i\|=1$, we have
\[
  u\bar u=\|u\|=\|h(1)\|=1,
  \qquad
  \xi\bar\xi=\|\xi\|=\|h(i)\|=1.
\]
The polar form associated with $\|\cdot\|$ is
\[
  B(a,b)=a\bar b+\bar a b=\|a+b\|-\|a\|-\|b\|.
\]
Since $h$ preserves $\|\cdot\|$, it also preserves $B$, i.e. $B(h(a), h(b))=B(a, b)$. As $B(1,i)=0$, it follows that
$B(u,\xi)=u\bar\xi+\bar u\xi=0$. Put $\eta=\xi\bar u$. Then
\[
  \eta+\bar\eta=\xi\bar u+\bar\xi u=0,
  \qquad
  \eta\bar\eta=(\xi\bar\xi)(u\bar u)=1.
\]
Every element of $\K$ with trace zero has the form
$\lambda i$ with $\lambda\in\Fp$. Hence $\eta=\lambda i$, and
$\eta\bar\eta=1$ gives $\lambda^2=1$. Therefore
$\xi=\eta u=\pm iu$, and for $x_1,x_2\in\Fp$ we obtain
\[
  h(x_1+ix_2)=x_1u+x_2\xi=u(x_1\pm ix_2).
\]
Thus $h(z)=uz$ or $h(z)=u\bar z$, and adding back $v$ gives
one of the two forms in \eqref{eq:motion-forms}.

It remains to prove uniqueness. Within either form, $v=g(0)$
and $u=g(1)-g(0)$, so the pair $(u,v)$ is determined by $g$. Moreover, if $uz+v=u'\bar z+v'$ for every $z$, then $z=0$ gives $v=v'$ and $z=1$ gives $u=u'$, whereas $z=i$ gives $ui=-ui$, a contradiction.
Thus the direct and reverse forms are disjoint. Finally, the norm-one group is the kernel of the surjective norm map
$\K^\times\to\Fp^\times$, and therefore has
$(p^2-1)/(p-1)=p+1$ elements. Since $v$ has $p^2$ choices, we obtain
$|\cG|=2p^2(p+1)$.
\end{proof}
We write $\cG^+$ and $\cG^-$ for the direct and reverse motions,
respectively. An ordered segment is an ordered pair of distinct points.
Its length is the distance between its endpoints.
If two ordered segments with distinct endpoints have
the same length, then exactly one direct motion and one reverse
motion take the first segment to the second.  Indeed, if their
difference vectors are $a,b\in\K^\times$, the direct multiplier is
$b/a$ and the reverse multiplier is $b/\bar a$; the translation is
then determined by one endpoint.

For $g\in\cG$, define the overlap multiplicity
\[
  m_g=|\{x\in E:g(x)\in E\}|=|E\cap g^{-1}(E)|.
\]

\subsection{Orbit energy}

The following inequality converts moments of the overlap function
into lower bounds for the number of congruence classes.

\begin{lemma}\label{lem:orbit-energy}
Let $k\geq2$ and $|E|\geq k$. Then
\begin{equation}\label{eq:orbit-energy}
  |\cT_k(E)|
  \geq
  \frac{\fall{|E|}{k}^2}{\displaystyle\sum_{g\in\cG}\fall{m_g}{k}}.
\end{equation}
\end{lemma}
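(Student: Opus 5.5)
Let $X_k$ denote the set of ordered $k$-tuples of pairwise distinct points of $E$, so $|X_k|=\fall{|E|}{k}$. The plan is the standard Cauchy--Schwarz argument applied to the partition of $X_k$ into congruence classes. For each class $\tau\in\cT_k(E)$ let $n_\tau$ be the number of tuples in $X_k$ lying in $\tau$. Then $\sum_\tau n_\tau=\fall{|E|}{k}$, and by Cauchy--Schwarz
\[
  \fall{|E|}{k}^2\leq|\cT_k(E)|\sum_{\tau}n_\tau^2 .
\]
It therefore suffices to show that $\sum_\tau n_\tau^2\leq\sum_{g\in\cG}\fall{m_g}{k}$.

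The quantity $\sum_\tau n_\tau^2$ counts ordered pairs $(a,b)\in X_k^2$ with $a$ congruent to $b$. For each such pair, choose some $g\in\cG$ with $g(a_j)=b_j$ for all $j$. This gives a map $(a,b)\mapsto(g,a)$. It is injective, since $b$ is recovered as $g(a)$.

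Now fix $g$ and look at the image. The tuple $a$ consists of $k$ pairwise distinct points $x\in E$ with $g(x)\in E$. Hence $a$ is an ordered $k$-tuple of distinct elements of $E\cap g^{-1}(E)$, and there are exactly $\fall{m_g}{k}$ such tuples. Summing over $g$ gives
\[
  \sum_\tau n_\tau^2\leq\sum_{g\in\cG}\fall{m_g}{k}.
\]
Combining this with the Cauchy--Schwarz inequality yields \eqref{eq:orbit-energy}.

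No step is a real obstacle; the only care needed is the injectivity. One could instead sum over all $g$ realizing each pair, which only overcounts, so the inequality holds either way. The denominator is positive because the identity has $m_g=|E|\geq k$.
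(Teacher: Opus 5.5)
Your proposal is correct and matches the paper's proof: Cauchy--Schwarz over the class sizes, then bounding the number of congruent pairs by the number of pairs $(g,a)$ with $a$ a $k$-tuple of distinct points of $E\cap g^{-1}(E)$, using that $b=g(a)$ is determined by $a$ and $g$. Your remark that the identity motion makes the denominator positive is a small detail the paper leaves implicit.
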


\begin{proof}
For a class $\omega\in\cT_k(E)$, let $s_\omega$ be the number of
ordered $k$-tuples of pairwise distinct points of $E$ that belong to $\omega$. Then $\sum_\omega s_\omega=\fall{|E|}{k}$, and
Cauchy--Schwarz gives
\[
  \fall{|E|}{k}^2
  \leq|\cT_k(E)|\sum_{\omega\in\cT_k(E)}s_\omega^2.
\]
The sum on the right counts ordered pairs $(T,T')$ of congruent
$k$-tuples. Each such pair is related by at least one motion $g$ with $g(T)=T'$, and $T'$ is determined by $T$ and $g$. Hence the sum is at most the number of pairs $(T,g)$ such that $T$ and $g(T)$ are $k$-tuples of pairwise distinct points of $E$, which is
$\sum_g\fall{m_g}{k}$.
\end{proof}

\begin{lemma}[Second momment]\label{lem:second-moment}
If $p\equiv3\pmod4$, then
\begin{equation}\label{eq:second-moment}
  \sum_{g\in\cG^+}\fall{m_g}{2}
  =\sum_{g\in\cG^-}\fall{m_g}{2}=Q(E),
  \qquad
  \sum_{g\in\cG}\fall{m_g}{2}=2Q(E).
\end{equation}
\end{lemma}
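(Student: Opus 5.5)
The plan is a double count of triples $(x,y,g)$ with $x\neq y$ in $E$, $g\in\cG^+$ (respectively $\cG^-$), and $g(x),g(y)\in E$. Fix $g$. The pairs $(x,y)$ of distinct points of $E\cap g^{-1}(E)$ number exactly $\fall{m_g}{2}$. Hence the number of such triples is $\sum_{g\in\cG^+}\fall{m_g}{2}$. Next we count in the other order. A triple is the same as a pair of ordered segments, $(x,y)$ and $(x',y')=(g(x),g(y))$, both with distinct endpoints in $E$, together with a direct motion $g$ sending the first to the second. Note that $g(x)\neq g(y)$ automatically, since $g$ is a bijection.

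For each pair of ordered segments $(x,y),(x',y')$ in $E$, we count the direct motions taking one to the other. Any isometry preserves $d$. So such a motion can exist only if $d(x,y)=d(x',y')$, and this common value is nonzero, because $p\equiv3\pmod 4$ makes the form anisotropic (Lemma~\ref{lem:isometries} and the remark following it). Conversely, suppose $d(x,y)=d(x',y')=r\neq0$. Write $a=y-x$ and $b=y'-x'$ in $\K^\times$, so that $a\bar a=b\bar b=r$. A direct motion $g^+_{u,v}$ sends $(x,y)$ to $(x',y')$ if and only if $ua=b$ and $v=x'-ux$. This has the unique solution $u=b/a$, which satisfies $u\bar u=1$, and then $v$ is determined. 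In the same way, exactly one reverse motion works, namely $u=b/\bar a$, which again has norm one. By the uniqueness part of Lemma~\ref{lem:isometries}, these are genuinely one motion of each type.

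It follows that $\sum_{g\in\cG^\pm}\fall{m_g}{2}$ equals the number of pairs of ordered segments in $E$ with equal nonzero length. That number is $\sum_{r\in\Fp^\times}\nu_E(r)^2=Q(E)$. Adding the two cases and using $\cG=\cG^+\sqcup\cG^-$ gives $2Q(E)$.

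No step here is a real obstacle. The only point needing care is that anisotropy guarantees distinct points have nonzero distance, so every pair of equal-length segments is counted in $Q(E)$ and none is lost at $r=0$.
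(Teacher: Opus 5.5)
Your proof is correct and follows the paper's argument. Both double count pairs consisting of a motion and an ordered segment of $E$ whose image also lies in $E$, and both use the fact that two ordered segments of equal nonzero length are related by exactly one direct and one reverse motion, with anisotropy excluding $r=0$. The only difference is that you verify this fact explicitly via $u=b/a$ and $u=b/\bar a$, where the paper cites the remark following \eqref{eq:motion-forms}.
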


\begin{proof}
For either sign, the corresponding sum counts a motion of that sign together with an ordered pair of distinct points of $E$ whose images also lie in $E$. Anisotropy ensures that every such segment has nonzero length. By the observation following \eqref{eq:motion-forms}, each pair of ordered segments of the same length contributes one direct motion and one reverse motion. Summing separately over direct and reverse motions gives the first two identities in \eqref{eq:second-moment};
their sum gives the last.
\end{proof}

For $k=2$, Lemma~\ref{lem:orbit-energy} and
Lemma~\ref{lem:second-moment} give $D\geq|E|^2(|E|-1)^2/(2Q(E))$,
which is Cauchy--Schwarz up to the factor $2$; the results of this
paper are the cases $k\geq3$.

\subsection{Distance energy and incidence estimates}

We use the following isosceles triangle estimate of
Murphy, Petridis, Pham, Rudnev, and Stevens
\cite[Theorem~4]{MPPRS}.  If $E\subset\Fp^2$ and
$|E|\leq p^\frac{4}{3}$, then
\begin{equation}\label{eq:hinge-bound}
  T^*(E)\ll |E|^\frac{7}{3},\footnote{Lewko \cite{LK} recently posted a preprint establishing an optimal point--line incidence bound over arbitrary fields. Combining this bound with the methods developed in \cite{Iosevichetal, Io2}, a routine computation shows that the exponent $7/3$ can be improved to $11/5$.}
\end{equation}
where
\[
  T^*(E)=
  \bigl|\{(a,b,c)\in E^3:
  d(a,b)=d(a,c),\ d(b,c)\ne0\}\bigr|.
\]
This isosceles triangle estimate implies directly the following energy bound via a Cauchy-Schwarz argument.
\begin{lemma}\label{lem:distance-energy-bound}
If $p\equiv3\pmod4$ and $|E|\leq p^\frac{4}{3}$, then
\begin{equation}\label{eq:distance-energy-bound}
  Q(E)\ll |E|^\frac{10}{3}.
\end{equation}
\end{lemma}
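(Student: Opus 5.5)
The plan is to write $Q(E)$ as a count of triples sharing a common apex and then apply Cauchy--Schwarz together with the isosceles bound \eqref{eq:hinge-bound}. For each $a\in E$ and $r\in\Fp^\times$, let
\[
  \nu_a(r)=|\{b\in E: d(a,b)=r\}|.
\]
Then $\nu_E(r)=\sum_{a\in E}\nu_a(r)$. Because $p\equiv3\pmod4$, we have $d(a,b)\neq0$ whenever $b\neq a$, so every such $b$ is counted for exactly one $r\in\Fp^\times$.

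By Cauchy--Schwarz over $a\in E$,
\[
  \nu_E(r)^2\leq |E|\sum_{a\in E}\nu_a(r)^2 .
\]
Summing over $r\in\Fp^\times$ gives
\[
  Q(E)\leq |E|\sum_{a\in E}\sum_{r\neq0}\nu_a(r)^2 .
\]
The inner double sum counts triples $(a,b,c)\in E^3$ with $d(a,b)=d(a,c)\neq0$.

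Next we separate these triples by whether $b=c$. The triples with $b=c$ contribute exactly $|E|(|E|-1)\leq|E|^2$. For the triples with $b\neq c$, anisotropy gives $d(b,c)\neq0$, so each one is counted by $T^*(E)$. (Triples in $T^*(E)$ with $d(a,b)=0$ have $a=b=c$, which is impossible when $d(b,c)\neq0$, so there is no mismatch in the conditions.)

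Therefore
\[
  Q(E)\leq |E|\bigl(T^*(E)+|E|^2\bigr).
\]
For $|E|\leq p^{4/3}$, \eqref{eq:hinge-bound} gives $T^*(E)\ll|E|^{7/3}$, and $|E|^2\leq|E|^{7/3}$, so $Q(E)\ll |E|^{10/3}$.

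None of these steps is difficult. The only point requiring care is matching the triples counted after Cauchy--Schwarz with the definition of $T^*(E)$. This is handled by treating the diagonal $b=c$ separately and using anisotropy to see that $b\neq c$ forces $d(b,c)\neq0$.
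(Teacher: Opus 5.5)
Your proof is correct and matches the paper's argument: you decompose $\nu_E(r)$ by apex, apply Cauchy--Schwarz to get $Q(E)\leq |E|\sum_{a}\sum_{r\neq0}\nu_a(r)^2$, split off the diagonal $b=c$ (contributing $|E|(|E|-1)$), and use anisotropy to place the remaining triples inside $T^*(E)$ before invoking \eqref{eq:hinge-bound}. Your parenthetical check that triples counted by $T^*(E)$ automatically satisfy $d(a,b)\neq0$ is correct, though only the inclusion in one direction is needed for the upper bound.
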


\begin{proof}
For $x\in E$ and $r\in\Fp^\times$, let
\[
  n_x(r)=|\{y\in E:d(x,y)=r\}|.
\]
Since $\nu_E(r)=\sum_{x\in E}n_x(r)$, Cauchy--Schwarz gives
\begin{equation}\label{eq:Q-hinge}
  Q(E)
  \leq |E|\sum_{x\in E}\sum_{r\in\Fp^\times}n_x(r)^2.
\end{equation}
The double sum on the right counts triples $(x,y,z)\in E^3$ with
$d(x,y)=d(x,z)\ne0$. The terms with $y=z$ contribute
$|E|(|E|-1)$. If $y\ne z$, anisotropy gives $d(y,z)\ne0$, so the
remaining terms are counted by $T^*(E)$.  Hence \eqref{eq:hinge-bound}
and \eqref{eq:Q-hinge} give
\[
  Q(E)\leq |E|\bigl(T^*(E)+|E|(|E|-1)\bigr)
  \ll |E|^\frac{10}{3}.
\] This completes the proof.
\end{proof}

Let $\mathbb F$ be a field. For a finite point set $P\subset \mathbb F^2$ and a finite set $\mathcal L$ of lines in $\mathbb F^2$, let $I(P,\mathcal L)$ denote their number of incidences between $P$ and $\mathcal{L}$.  We also use the following Cartesian product incidence theorem of Stevens and de Zeeuw \cite[Theorem~4]{StevensDeZeeuw}.

\begin{theorem}[Stevens--de Zeeuw]\label{thm:sdz}
Let $A,B$ be finite subsets of a field $\mathbb F$, with $|A|\leq|B|$, and let $\mathcal L$ be a finite set of lines in $\mathbb F^2$. Suppose that
\[
  |A| |B|^2\leq|\mathcal L|^3.
\]
If $\mathbb F$ has positive characteristic $p$, also suppose that
\[
  |A| |\mathcal L|\leq \cinc p^2,
\]
where $\cinc>0$ is a sufficiently small absolute constant.
Then
\begin{equation}\label{eq:sdz}
  I(A\times B,\mathcal L)
  \ll |A|^\frac{3}{4}|B|^\frac{1}{2}|\mathcal L|^\frac{3}{4}
      +|\mathcal L|.
\end{equation}
The implied constant is absolute.
\end{theorem}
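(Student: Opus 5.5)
This result is quoted from Stevens and de Zeeuw \cite[Theorem~4]{StevensDeZeeuw} and is used as a black box. If I had to reprove it, I would reduce it to Rudnev's point--plane incidence bound in $\mathbb F^3$, which has the form $I(P,\Pi)\ll m\sqrt n+kn$ for $m\geq n$ points and planes, where $k$ is the maximum number of collinear points and $n\ll p^2$ in characteristic $p$. The exponents below have not been checked in detail, so this is a plan rather than a verified derivation.

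\emph{Reduction to an energy.} First discard vertical and horizontal lines. Each of these meets $A\times B$ in at most $|B|$ points and contributes only to the $|\mathcal L|$ term or to lower-order terms, after a routine rich/poor split. Write a remaining line as $y=\lambda x+\mu$ with $\lambda\neq0$. A point $(a,b)$ lies on it exactly when $a=\alpha b-\beta$, where $\alpha=1/\lambda$ and $\beta=\mu/\lambda$. Hence
\[
I=\sum_{b\in B}\sum_{\ell}\mathbf 1_A(\alpha_\ell b-\beta_\ell).
\]
Applying Cauchy--Schwarz over the value $a\in A$ gives $I^2\leq|A|\,\mathsf E$. Here $\mathsf E$ counts quadruples $(b,\ell,b',\ell')$ with $\alpha_\ell b-\beta_\ell=\alpha_{\ell'}b'-\beta_{\ell'}\in A$. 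Since the constraint of lying in $A$ can be dropped, $\mathsf E$ is at most the number of solutions of
\[
b\,\alpha-\beta-\alpha' b'+\beta'=0 .
\]
This equation is linear in the point $(\alpha,\beta,b')$ once the plane parameters $(b,\alpha',\beta')$ are fixed. We therefore get $|B||\mathcal L|$ points and $|B||\mathcal L|$ planes in $\mathbb F^3$.

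\emph{Rudnev's bound and collinearity.} Apply Rudnev's theorem. The characteristic hypothesis of that theorem becomes a condition of the shape $|A||\mathcal L|\leq\cinc p^2$, depending on which variable is squared. The collinearity parameter $k$ is bounded by $\max(|A|,|B|)$, since a line in the point set corresponds to fixing a line $\ell$ and letting $b$ vary. The hypothesis $|A||B|^2\leq|\mathcal L|^3$ is what makes the $m\sqrt n$ term dominate the $kn$ term.

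\emph{Main obstacle: the exponents.} The naive choice above yields $|A|^{1/2}|B|^{3/4}|\mathcal L|^{3/4}$, which is weaker than the claimed bound. To reach $|A|^{3/4}|B|^{1/2}|\mathcal L|^{3/4}$, I would instead square the $B$-variable while keeping pairs in $A$. Concretely, I would count quadruples $(a,a',\ell,\ell')$ whose images under $\ell$ and $\ell'$ coincide in $B$. I would also combine the argument with a dyadic decomposition of the lines by richness $|\ell\cap(A\times B)|$, so that the energy bound can be applied on each rich level and the levels summed. Arranging this bookkeeping so that the characteristic condition appears exactly as $|A||\mathcal L|\leq\cinc p^2$, as in \cite{StevensDeZeeuw}, is the step I expect to be hardest. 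For the purposes of this paper I would simply invoke the published statement.
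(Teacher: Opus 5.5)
Your operative argument is the same as the paper's. The paper gives no proof of Theorem~\ref{thm:sdz} and simply quotes \cite[Theorem~4]{StevensDeZeeuw}, which is what you ultimately do, so nothing more is needed for the purposes of this paper.

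Your reconstruction sketch, though, misplaces the difficulty.

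\emph{The exponents and the characteristic condition.} The fix you propose, Cauchy--Schwarz over $b\in B$ with pairs in $A$, is the right one, and once it is made nothing delicate remains. Write $\ell:y=\lambda x+\mu$ and $I=I(A\times B,\mathcal L)$. Then $I^2$ is at most $|B|$ times the number of solutions of $\lambda a+\mu=\lambda'a'+\mu'$. This is $|B|$ times the number of incidences between the $|A||\mathcal L|$ points $(a,\lambda',\mu')$ and the $|A||\mathcal L|$ planes $\lambda X-a'Y-Z+\mu=0$. Rudnev's theorem therefore gives the main term $|A|^{3/4}|B|^{1/2}|\mathcal L|^{3/4}$ and the condition $|A||\mathcal L|\leq\cinc p^2$ directly, without dyadic bookkeeping.

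\emph{The real gap: collinearity.} Your claim $k\leq\max(|A|,|B|)$ is false in both set-ups. Consider points on a line contained in a slice where the $A$-coordinate is fixed (in your first version, the $B$-coordinate). These correspond to concurrent or parallel lines of $\mathcal L$, so $k$ can be as large as $|\mathcal L|$. The resulting term $(|A||B|)^{1/2}|\mathcal L|$ is weaker than the target by a factor $(|\mathcal L|/|A|)^{1/4}$.

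One repair is as follows. Assume $I\geq4|\mathcal L|$, since otherwise there is nothing to prove. Discard the lines with fewer than $I/(2|\mathcal L|)$ incidences; together they carry fewer than $I/2$ incidences. Surviving lines through a common point, or with a common slope, are disjoint away from that point, so there are at most $4|A||B||\mathcal L|/I$ of them. When the $k$-term then dominates, it yields $I\ll(|A||B||\mathcal L|)^{2/3}$, which is admissible when $|B|^2\leq|A||\mathcal L|$. In the opposite range the trivial bound $I\leq|A||\mathcal L|+|A||B|$ suffices.

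\emph{Axis-parallel lines.} These can contribute up to $2|A||B|$ incidences, not $O(|\mathcal L|)$. They are absorbed into the main term because $|A||B|\leq|A|^{3/4}|B|^{1/2}|\mathcal L|^{3/4}$ is equivalent to $|A||B|^2\leq|\mathcal L|^3$. That is where this hypothesis is really used. It also gives $|A|\leq|\mathcal L|$, which makes the case $k\leq|A|$ harmless.
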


The second hypothesis involves the characteristic of
$\mathbb F$, not its cardinality. We will apply the theorem with
$\mathbb F=\K$, which has $p^2$ elements but characteristic $p$, so the condition to be verified is $|A| |\mathcal L|\leq\cinc p^2$ and not $|A| |\mathcal L|\leq\cinc p^4$. This is the form in which the theorem is stated in \cite{StevensDeZeeuw}.

\section{Proof of the main theorems}\label{sec:proof}

For real $t\geq1$, define
\[
  R_t^\pm=|\{g\in\cG^\pm:m_g\geq t\}|,
  \qquad R_t=R_t^++R_t^-.
\]

\subsection{Rich motions}

\begin{lemma}[Rich motions]\label{lem:rich-motions}
Assume that $p\equiv3\pmod4$ and $E\subset\Fp^2$.  There is an
absolute constant $t_0$ such that the following holds.  If
$t_0\leq t\leq |E|$ and
\[
  |E|R_t^\pm\leq \cinc p^2,
\]
then
\begin{equation}\label{eq:rich-motions}
  R_t^\pm\ll \frac{|E|^5}{t^4}.
\end{equation}
\end{lemma}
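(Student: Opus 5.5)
The plan is to apply Theorem~\ref{thm:sdz} over the field $\K$ to the family of graph lines of the rich motions, with the Cartesian product $A\times B=E\times E$ (direct case) or $\bar E\times E$ (reverse case). Fix the sign, and let $\mathcal L=\{\ell_g: g\in\cG^\pm,\ m_g\geq t\}$, where $\ell_g=\{(z,w):w=uz+v\}$. For reverse motions $g(z)=u\bar z+v$ we use the same line $w=uz+v$ but with the point set $\bar E\times E$: a point $(\bar x,y)$ lies on it exactly when $y=g(x)$. By Lemma~\ref{lem:isometries}, distinct motions of a fixed sign give distinct pairs $(u,v)$, hence distinct lines, so $|\mathcal L|=R_t^\pm$. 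Each line carries exactly $m_g\geq t$ points of the product set, so
\[
  tR_t^\pm\leq I(A\times B,\mathcal L).
\]
Here $|A|=|B|=|E|$, and conjugation is a bijection on $\K$, so $|\bar E|=|E|$.

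Next I verify the hypotheses of Theorem~\ref{thm:sdz}. The characteristic condition $|A||\mathcal L|\leq \cinc p^2$ is exactly the assumption $|E|R_t^\pm\leq\cinc p^2$, since $\K$ has characteristic $p$. The other condition, $|E|^3\leq |\mathcal L|^3$, i.e.\ $R_t^\pm\geq |E|$, may fail. I will handle this by case analysis.

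\emph{Case $R_t^\pm\leq |E|$.} The target bound $|E|^5/t^4\geq |E|$ holds because $t\leq|E|$, so this case is trivial.

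\emph{Case $R_t^\pm> |E|$.} Theorem~\ref{thm:sdz} applies and gives
\[
  tR_t^\pm\ll |E|^{5/4}(R_t^\pm)^{3/4}+R_t^\pm .
\]
If the $|\mathcal L|$ term dominates, then $t\ll 1$. Choosing $t_0$ larger than twice the implied constant makes this impossible, which is where the absolute constant $t_0$ comes from. Otherwise $t(R_t^\pm)^{1/4}\ll |E|^{5/4}$, that is, $R_t^\pm\ll |E|^5/t^4$.

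An alternative for the first hypothesis is to pass to a subfamily of exactly $|E|$ lines, but the case split is simpler and suffices. The step needing the most care is the reverse-motion encoding: checking that the map $(\bar x,y)$ makes the incidence count equal to $m_g$, and that distinct reverse motions give distinct lines. Both follow from the uniqueness part of Lemma~\ref{lem:isometries}. Finally, $R_t=R_t^++R_t^-$ is handled by applying the bound to each sign separately, which is why the characteristic condition is imposed sign by sign.
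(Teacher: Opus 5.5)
Your proposal is correct and follows essentially the same route as the paper. You use the same graph-line encoding over $\K$ (with $E\times E$ for direct motions and $\bar E\times E$ for reverse ones), the same case split comparing $R_t^\pm$ with $|E|$ to secure the size hypothesis of Theorem~\ref{thm:sdz}, and the same absorption of the $|\mathcal L|$ term by taking $t_0$ larger than twice the implied constant.
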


\begin{proof}
We use the identification of $\Fp^2$ with $\K$.  For the direct
motions, associate $g^+_{u,v}$ with the line
\[
  w=uz+v
\]
in $\K^2$.  This line contains exactly $m_g$ points of $E\times E$. For the reverse motions, use the same line model with the point set $\bar E\times E$, where $\bar E=\{\bar z:z\in E\}$.
For each fixed sign, distinct motions give distinct lines.

Fix either sign and put $R=R_t^\pm$.  If $R<|E|$, the desired
estimate follows, since $t\leq|E|$ implies
\[
|E|\leq \frac{|E|^5}{t^4}.
\]
Suppose that $R\geq|E|$. The size
condition in Theorem~\ref{thm:sdz} holds because $|E|^3\leq R^3$,
and its characteristic condition is the assumed inequality
$|E|R\leq \cinc p^2$. Here the relevant parameter
is the characteristic $p$ of $\K$, not its cardinality $p^2$.
Applying that theorem over $\K$ gives
\[
 t R
  \leq I(E\times E,\mathcal L)
  \ll |E|^\frac{5}{4}R^\frac{3}{4}+R
\]
for direct motions, and the same estimate with $\bar E\times E$ for reverse motions. For $t\geq t_0$, the last term can be absorbed into the left side. Rearranging gives
\[
R\ll \frac{|E|^5}{t^4}.
\]
\end{proof}

\begin{lemma}\label{lem:subfamily-cutoff}
There are absolute constants $c_0,C_0>0$ with the following
property. Let $p\equiv3\pmod4$ and let $E\subset\Fp^2$. If
$1\leq |E|\leq c_0p$ and
\[
  K=C_0\max\{t_0,2,|E|^{3/2}p^{-1/2}\},
\]
then
\begin{equation}\label{eq:subfamily-cutoff}
  |E|R_t^\pm\leq\cinc p^2
  \qquad\text{for every }K\leq t\leq |E|.
\end{equation}
\end{lemma}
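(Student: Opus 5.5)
The plan is a truncation (subfamily) argument by contradiction, applying Theorem~\ref{thm:sdz} to a subfamily of rich motions whose size is chosen so that the characteristic condition holds automatically. Fix a sign and some $t$ with $K\leq t\leq|E|$. Suppose, for contradiction, that $|E|R_t^\pm>\cinc p^2$. Put
\[
  M=\lfloor \cinc p^2/|E|\rfloor .
\]
Then $R_t^\pm> M$, so we can choose a set $\mathcal L'$ of exactly $M$ motions of the given sign, each with overlap at least $t$. As in the proof of Lemma~\ref{lem:rich-motions}, we view these as distinct lines in $\K^2$: lines $w=uz+v$ incident to $E\times E$ in the direct case, and to $\bar E\times E$ in the reverse case. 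Each line carries at least $t$ incidences, so $I\geq tM$.

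Next I verify the hypotheses of Theorem~\ref{thm:sdz} for $A=E$ (or $\bar E$), $B=E$ and $\mathcal L=\mathcal L'$.
\begin{itemize}[label={}, leftmargin=1em]
  \item \emph{Characteristic condition.} We have $|E|M\leq\cinc p^2$ by the choice of $M$.
  \item \emph{Size condition.} This is $|E|^3\leq M^3$, i.e.\ $M\geq|E|$. It suffices that $\cinc p^2/|E|\geq |E|+1$, which holds once $|E|\leq c_0p$ with $c_0$ small relative to $\sqrt{\cinc}$. This is where the hypothesis $|E|\leq c_0p$ is used. It also guarantees $M\geq1$.
\end{itemize}
The theorem then gives
\[
  tM\leq I(E\times E,\mathcal L')\leq C_1\bigl(|E|^{5/4}M^{3/4}+M\bigr).
\]
Since $t\geq K\geq C_0\max\{t_0,2\}$, we may assume $C_0t_0\geq 2C_1$. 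The term $C_1M$ is then at most $tM/2$ and can be absorbed, leaving $M\ll |E|^5/t^4$.

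Now compare with the lower bound $M\gg p^2/|E|$, which holds because $M\geq|E|\geq1$ and the floor loses at most a factor of $2$. This gives
\[
  \frac{p^2}{|E|}\ll\frac{|E|^5}{t^4},
  \qquad\text{that is,}\qquad
  t\leq C_2|E|^{3/2}p^{-1/2}
\]
for an absolute constant $C_2$. Choosing $C_0>C_2$ contradicts $t\geq K\geq C_0|E|^{3/2}p^{-1/2}$. Hence $|E|R_t^\pm\leq\cinc p^2$ for every $t\in[K,|E|]$ and both signs. If the interval $[K,|E|]$ is empty, the statement is vacuous.

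The only delicate point is the bookkeeping of constants. The constant $c_0$ must be fixed first, depending only on $\cinc$, so that $M\geq|E|$. Then $C_0$ must be fixed depending on the absolute constants $C_1$ and $t_0$ from Theorem~\ref{thm:sdz} and Lemma~\ref{lem:rich-motions}, so that both the absorption step and the final contradiction go through. Nothing here depends on $t$ beyond $t\geq K$, so no monotonicity of $R_t$ in $t$ is needed.
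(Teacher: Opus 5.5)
Your proof is correct and is essentially the paper's argument: you argue by contradiction, pass to a subfamily of about $\cinc p^2/|E|$ rich motions, and note that the characteristic condition holds by construction while the size condition $M\geq|E|$ follows from $|E|\leq c_0p$. You then apply Stevens--de Zeeuw, absorb the $|\mathcal L|$ term using $t\geq K$, and compare with $M\gg p^2/|E|$. The only cosmetic difference is that the paper takes $M=\lfloor \cinc p^2/(2|E|)\rfloor$ instead of $\lfloor \cinc p^2/|E|\rfloor$, and this changes nothing.
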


\begin{proof}
Fix a sign and suppose, to the contrary, that
$|E|R_t^\pm>\cinc p^2$ for some $K\leq t\leq |E|$. Put
\[
  M=\left\lfloor\frac{\cinc p^2}{2|E|}\right\rfloor.
\]
The contrary assumption gives
$R_t^\pm>\cinc p^2/|E|\geq2M$, so a subfamily of $M$ such motions is available.
By choosing $c_0$ sufficiently small, we may ensure that
$M\geq |E|$ and $M\geq \cinc p^2/(4|E|)$. Select $M$ motions counted by $R_t^\pm$ and take their graph lines. For direct motions use the point set $E\times E$, and for reverse motions use $\bar E\times E$.
The two hypotheses of Theorem~\ref{thm:sdz} hold because
\[
  |E|^3\leq M^3,
  \qquad |E|M\leq\cinc p^2.
\]
Every selected line contains at least $t$ points of the relevant
Cartesian product, so Theorem~\ref{thm:sdz} gives
\[
  tM\ll |E|^{5/4}M^{3/4}+M.
\]
Choosing $C_0$ sufficiently large allows the last term to be
absorbed. It follows that
\[
  t\ll |E|^{5/4}M^{-1/4}
  \ll |E|^{3/2}p^{-1/2},
\]
contrary to $t\geq K$.
\end{proof}

\subsection{Moments of the overlap function}

Each moment $\sum_g\fall{m_g}{k}$ is estimated by splitting the
motions at a cutoff $K$: those with $m_g<K$ are controlled by the
second moment, and those with $m_g\geq K$ by
Lemma~\ref{lem:rich-motions}. The next lemma carries out the
splitting for a cutoff at which Lemma~\ref{lem:rich-motions} is
known to apply.

\begin{lemma}[Moment decomposition]\label{lem:decomposition}
Let $p\equiv3\pmod4$, let $E\subset\Fp^2$ with $|E|\geq3$, and let
$k\geq3$ be an integer. Let $K\geq\max\{t_0,2\}$ be a real number
such that
\begin{equation}\label{eq:char-hypothesis}
  |E|R_t^\pm\leq\cinc p^2
  \qquad\text{for all real } t \text{ with } K\leq t\leq|E|.
\end{equation}
Then
\begin{equation}\label{eq:decomposition}
  \sum_{g\in\cG}\fall{m_g}{k}
  \leq 2K^{k-2}Q(E)+C_k\,\Sigma_k(K),
\end{equation}
where $C_k$ depends only on $k$ and
\[
  \Sigma_3(K)=\frac{|E|^5}{K},
  \qquad
  \Sigma_4(K)=|E|^5\log|E|,
  \qquad
  \Sigma_k(K)=|E|^{k+1}\quad(k\geq5).
\]
\end{lemma}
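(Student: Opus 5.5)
We split the sum in \eqref{eq:decomposition} according to whether $m_g<K$ or $m_g\geq K$. For the low part we use $\fall{m}{k}=\fall{m}{2}\,(m-2)\cdots(m-k+1)\leq \fall{m}{2}\,m^{k-2}$. When $m_g<K$, this gives $\fall{m_g}{k}\leq K^{k-2}\fall{m_g}{2}$. Summing over all $g$ with $m_g<K$ and applying Lemma~\ref{lem:second-moment}, the low part is at most $K^{k-2}\sum_g\fall{m_g}{2}=2K^{k-2}Q(E)$. This is the first term of \eqref{eq:decomposition}.

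For the high part we note that $m_g\leq|E|$ always, so only motions with $K\leq m_g\leq |E|$ occur. If $K>|E|$ the high part is empty, and there is nothing to prove. Otherwise, by hypothesis \eqref{eq:char-hypothesis} and $K\geq t_0$, Lemma~\ref{lem:rich-motions} applies at every real $t\in[K,|E|]$ and gives $R_t\ll |E|^5t^{-4}$. We then decompose dyadically: for $j\geq0$, let $t_j=2^jK$ and consider the block of motions with $t_j\leq m_g<t_{j+1}$, stopping once $t_j>|E|$. Each block has at most $R_{t_j}\ll|E|^5t_j^{-4}$ elements, and each of its terms satisfies $\fall{m_g}{k}\leq m_g^k<2^kt_j^k$. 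Hence the block contributes $\ll_k |E|^5t_j^{k-4}$. Summing over $j$ gives three cases:
\begin{itemize}
\item For $k=3$ the block bounds form a geometric series dominated by the first term, giving $\ll |E|^5/K$.
\item For $k=4$ each block contributes $\ll|E|^5$. There are at most $\log_2(|E|/K)+1\ll\log|E|$ blocks, using $|E|\geq3$ and $K\geq2$, so the total is $\ll|E|^5\log|E|$.
\item For $k\geq5$ the series $\sum_j t_j^{k-4}$ is geometric with ratio $2^{k-4}\geq2$, so it is dominated by its last term, whose $t_j$ is at most $|E|$. This gives $\ll_k|E|^5\cdot|E|^{k-4}=|E|^{k+1}$.
\end{itemize}
Adding the low and high parts yields \eqref{eq:decomposition} with a suitable $C_k$.

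The only delicate point is bookkeeping rather than substance. The dyadic endpoints $t_j$ must lie in the range $[K,|E|]$ where Lemma~\ref{lem:rich-motions} is available, and that lemma must be applied separately to $R_t^+$ and $R_t^-$ and then added. The last block may have $t_{j+1}>|E|$, but it is still counted via $R_{t_j}$ with $t_j\leq|E|$, so this causes no difficulty. Finally, the constant in the $k\geq5$ case depends on $k$ only through $2^k$ and the geometric sum, which is why $C_k$ depends only on $k$.
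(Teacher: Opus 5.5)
Your proposal is correct and takes essentially the same route as the paper. The low part is bounded via $\fall{m}{k}\le m^{k-2}\fall{m}{2}$ and the second-moment identity. The high part uses the dyadic cutoffs $t_j=2^jK\le|E|$, with Lemma~\ref{lem:rich-motions} applied separately to $R^\pm_{t_j}$, followed by the same three-case evaluation of $\sum_j t_j^{k-4}$.
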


\begin{proof}
We split the sum according to whether $m_g<K$ or $m_g\geq K$.

For the first part, note that $\fall{m}{k}\leq m^{k-2}\fall{m}{2}$
for every integer $m\geq0$. Hence, by Lemma~\ref{lem:second-moment},
\begin{equation}\label{eq:low-part}
  \sum_{m_g<K}\fall{m_g}{k}
  \leq K^{k-2}\sum_{g\in\cG}\fall{m_g}{2}
  =2K^{k-2}Q(E).
\end{equation}

For the second part, if $K>|E|$ there is nothing to prove, since
$m_g\leq|E|$ for every $g$. Otherwise let $J\geq0$ be the largest
integer with $2^JK\leq|E|$, and put $t_j=2^jK$ for $0\leq j\leq J$.
Every motion with $m_g\geq K$ satisfies $t_j\leq m_g<2t_j$ for
exactly one $j\leq J$, because $m_g\leq|E|<2^{J+1}K$. For such a
motion $\fall{m_g}{k}\leq m_g^k<2^kt_j^k$, and the number of such
motions is at most $R^+_{t_j}+R^-_{t_j}$. Since
$t_0\leq t_j\leq|E|$
and \eqref{eq:char-hypothesis} holds at $t=t_j$,
Lemma~\ref{lem:rich-motions} gives
$R^\pm_{t_j}\leq C'|E|^5/t_j^4$
with an absolute constant $C'$. Therefore
\begin{equation}\label{eq:high-part}
  \sum_{m_g\geq K}\fall{m_g}{k}
  \leq 2^{k+1}C'\,|E|^5\sum_{j=0}^{J}t_j^{k-4}.
\end{equation}
It remains to evaluate the last sum. For $k=3$ it equals
$K^{-1}\sum_{j\leq J}2^{-j}\leq2/K$. For $k=4$ it equals $J+1$, and $J+1\leq\log_2|E|\leq2\log|E|$ because, using $K\geq2$,
$2^{J+1}\leq2^JK\leq|E|$. For $k\geq5$ it is a geometric series with ratio $2^{k-4}\geq2$, so it is at most
$2t_J^{k-4}\leq2|E|^{k-4}$. In each case \eqref{eq:low-part} and
\eqref{eq:high-part} give \eqref{eq:decomposition}.
\end{proof}

The hypothesis \eqref{eq:char-hypothesis} is verified with the
second moment.

\begin{lemma}[Second-moment control of rich motions]
\label{lem:char-condition}
Let $p\equiv3\pmod4$ and $E\subset\Fp^2$. For every real $t\geq2$,
\begin{equation}\label{eq:R-second-moment}
  R_t^\pm\leq\frac{2Q(E)}{t^2}.
\end{equation}
Consequently, \eqref{eq:char-hypothesis} holds for every real
$K\geq2$ with
\begin{equation}\label{eq:K-condition}
  K\geq\left(\frac{2}{\cinc}\right)^{1/2}
  \frac{(|E|Q(E))^{1/2}}{p}.
\end{equation}
\end{lemma}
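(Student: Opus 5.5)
This is a Markov-type argument applied to the exact second moment. Fix a sign and a real $t\geq2$. Every motion $g\in\cG^\pm$ with $m_g\geq t$ is counted by $R_t^\pm$. For such a motion we use the falling-factorial lower bound
\[
\fall{m_g}{2}=m_g(m_g-1)\geq m_g^2/2\geq t^2/2,
\]
which holds because $m_g\geq t\geq2$ gives $m_g-1\geq m_g/2$. All terms $\fall{m_g}{2}$ are nonnegative, so Lemma~\ref{lem:second-moment} applied to the motions of the fixed sign yields
\[
\frac{t^2}{2}R_t^\pm\leq\sum_{g\in\cG^\pm,\ m_g\geq t}\fall{m_g}{2}\leq\sum_{g\in\cG^\pm}\fall{m_g}{2}=Q(E).
\]
Hence $R_t^\pm\leq 2Q(E)/t^2$, which is \eqref{eq:R-second-moment}.

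For the consequence, fix $K\geq2$ satisfying \eqref{eq:K-condition}, and let $t$ be any real number with $K\leq t\leq|E|$. Since $t\geq K\geq2$, the first part applies. It gives
\[
|E|R_t^\pm\leq\frac{2|E|Q(E)}{t^2}\leq\frac{2|E|Q(E)}{K^2}.
\]
Squaring \eqref{eq:K-condition} gives $K^2\geq 2|E|Q(E)/(\cinc p^2)$. The right-hand side is therefore at most $\cinc p^2$, and this is exactly \eqref{eq:char-hypothesis}.

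There is no real obstacle. The only points to check are that the bound $\fall{m}{2}\geq m^2/2$ requires $m\geq2$, which is why $t\geq2$ is imposed, and that we use the signed identity from Lemma~\ref{lem:second-moment} rather than the total $2Q(E)$. The signed identity gives the constant $2$ for each sign separately. Using the total would also suffice for each $R_t^\pm$, at the cost of a worse constant.
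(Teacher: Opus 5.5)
Your proof is correct and matches the paper's argument: each motion counted by $R_t^\pm$ contributes at least $t^2/2$ to the signed second moment $\sum_{g\in\cG^\pm}\fall{m_g}{2}=Q(E)$, and the consequence follows by using $t\geq K$ and squaring the cutoff condition. The only cosmetic difference is that you bound $m_g(m_g-1)\geq m_g^2/2\geq t^2/2$ directly, whereas the paper goes through $m_g(m_g-1)\geq t(t-1)\geq t^2/2$.
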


\begin{proof}
Every motion counted by $R_t^\pm$ contributes at least
$t(t-1)\geq t^2/2$ to $\sum_{g\in\cG^\pm}\fall{m_g}{2}$, which
equals $Q(E)$ by Lemma~\ref{lem:second-moment}. This gives
\eqref{eq:R-second-moment}. If $K\geq2$ satisfies \eqref{eq:K-condition} and $t\geq K$, then
$|E|R_t^\pm\leq2|E|Q(E)/K^2\leq\cinc p^2$.
\end{proof}

\subsection{The third moment}

Throughout the rest of this section, put
\begin{equation}\label{eq:L0}
  L_0=\max\{t_0,\,2,\,(2/\cinc)^{1/2}\},
\end{equation}
an absolute constant, and write $Q=Q(E)$. Anisotropy gives
$\sum_{r\neq0}\nu_E(r)=|E|(|E|-1)$, so that
\begin{equation}\label{eq:Q-trivial}
  0<Q\leq|E|^2(|E|-1)^2\leq|E|^4.
\end{equation}

\begin{proposition}[Third moment]\label{prop:third-moment}
Let $p\equiv3\pmod4$ be prime, and let $E\subset\Fp^2$ with
$|E|\geq3$. Then
\begin{equation}\label{eq:third-moment-unconditional}
  \sum_{g\in\cG}\fall{m_g}{3}
  \ll|E|^{5/2}Q^{1/2}+\frac{|E|^{1/2}Q^{3/2}}{p}.
\end{equation}
In particular, if $Q\leq Cp|E|^2$ for some $C\geq1$, then
\begin{equation}\label{eq:third-moment-final}
  \sum_{g\in\cG}\fall{m_g}{3}
  \ll_{C}|E|^{5/2}Q^{1/2}.
\end{equation}
\end{proposition}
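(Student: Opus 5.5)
We apply Lemma~\ref{lem:decomposition} with $k=3$ and a cutoff $K$ chosen to balance the two terms in \eqref{eq:decomposition}. That lemma needs the characteristic hypothesis \eqref{eq:char-hypothesis}, which we obtain from Lemma~\ref{lem:char-condition}. Concretely, we set
\[
  K=\max\Bigl\{L_0,\ \frac{|E|^{5/2}}{Q^{1/2}},\ L_0\frac{(|E|Q)^{1/2}}{p}\Bigr\},
\]
with $L_0$ as in \eqref{eq:L0}. Then $K\geq\max\{t_0,2\}$, and since $L_0\geq(2/\cinc)^{1/2}$, the cutoff satisfies \eqref{eq:K-condition}. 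Hence \eqref{eq:char-hypothesis} holds, and Lemma~\ref{lem:decomposition} gives
\[
  \sum_{g\in\cG}\fall{m_g}{3}\leq 2KQ+C_3\frac{|E|^5}{K}.
\]

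The second term is handled by the middle entry of the maximum: $K\geq|E|^{5/2}Q^{-1/2}$, so $|E|^5/K\leq|E|^{5/2}Q^{1/2}$. The first term $2KQ$ is bounded by $2Q$ times each entry of the maximum in turn:
\begin{itemize}
\item[(i)] $L_0Q\ll Q\leq|E|^{5/2}Q^{1/2}$, because $Q^{1/2}\leq|E|^2\leq|E|^{5/2}$ by \eqref{eq:Q-trivial};
\item[(ii)] $Q\cdot|E|^{5/2}Q^{-1/2}=|E|^{5/2}Q^{1/2}$;
\item[(iii)] $Q\cdot L_0(|E|Q)^{1/2}/p\ll|E|^{1/2}Q^{3/2}/p$.
\end{itemize}
Summing these contributions gives \eqref{eq:third-moment-unconditional}.

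For \eqref{eq:third-moment-final}, it suffices to compare the two terms. Their ratio is
\[
  \frac{|E|^{1/2}Q^{3/2}/p}{|E|^{5/2}Q^{1/2}}=\frac{Q}{p|E|^2}\leq C
\]
under the hypothesis $Q\leq Cp|E|^2$, so the second term is $\ll_C$ the first.

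The only delicate point is ensuring the cutoff is admissible. Lemma~\ref{lem:rich-motions} requires $t\geq t_0$, and the characteristic condition must hold for all $t\in[K,|E|]$. Both are secured by taking the maximum with the absolute constants $L_0$ and $L_0(|E|Q)^{1/2}/p$. If $K>|E|$, Lemma~\ref{lem:decomposition} still applies, since the high part is then empty, so no case split is needed. The remaining work is this bookkeeping of the three regimes of the maximum, checking that each produces one of the two stated terms.
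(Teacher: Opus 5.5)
Your proof is correct and follows the paper's own argument: you apply Lemma~\ref{lem:decomposition} with $k=3$, verify its characteristic hypothesis via Lemma~\ref{lem:char-condition}, and balance $2KQ$ against $C_3|E|^5/K$ with a cutoff built from $|E|^{5/2}Q^{-1/2}$ and $(|E|Q)^{1/2}/p$. The only difference is cosmetic. The paper takes $K=L_0\max\{|E|^{5/2}Q^{-1/2},(|E|Q)^{1/2}/p\}$ and obtains $K\geq L_0$ from $|E|^{5/2}Q^{-1/2}\geq|E|^{1/2}\geq 1$, whereas you list $L_0$ as a separate entry of the maximum and absorb the resulting $L_0Q$ term using $Q\leq|E|^{5/2}Q^{1/2}$.
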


\begin{proof}
Choose the cutoff
\begin{equation}\label{eq:cutoff}
  K=L_0\max\left\{\frac{|E|^{5/2}}{Q^{1/2}},\
  \frac{(|E|Q)^{1/2}}{p}\right\}.
\end{equation}
By \eqref{eq:Q-trivial}, $|E|^{5/2}Q^{-1/2}\geq|E|^{1/2}\geq1$, so
$K\geq L_0\geq\max\{t_0,2\}$; and $K$ satisfies
\eqref{eq:K-condition} by the choice of $L_0$. Hence
Lemma~\ref{lem:char-condition} shows that
\eqref{eq:char-hypothesis} holds, and Lemma~\ref{lem:decomposition}
with $k=3$ gives
\[
  \sum_{g\in\cG}\fall{m_g}{3}
  \leq2KQ+C_3\frac{|E|^5}{K}.
\]
Using $\max\{a,b\}\leq a+b$ in \eqref{eq:cutoff},
\[
  KQ\leq L_0\left(|E|^{5/2}Q^{1/2}+\frac{|E|^{1/2}Q^{3/2}}{p}\right),
  \qquad
  \frac{|E|^5}{K}\leq\frac{|E|^5Q^{1/2}}{L_0}|E|^{5/2}
  \leq|E|^{5/2}Q^{1/2},
\]
which proves \eqref{eq:third-moment-unconditional}. If
$Q\leq Cp|E|^2$, then
\[
  \frac{|E|^{1/2}Q^{3/2}}{p}
  =|E|^{5/2}Q^{1/2}\cdot\frac{Q}{p|E|^2}
  \leq C|E|^{5/2}Q^{1/2},
\]
and \eqref{eq:third-moment-final} follows.
\end{proof}

\begin{proof}[Proof of Theorems~\ref{thm:adaptive}
and~\ref{thm:unconditional}]
For $|E|\geq3$ we have $\fall{|E|}{3}\geq(2/9)|E|^3$, so
Lemma~\ref{lem:orbit-energy} with $k=3$ gives
$|\cT_3(E)|\gg|E|^6/\sum_g\fall{m_g}{3}$. Inserting
\eqref{eq:third-moment-unconditional} and using
$(a+b)^{-1}\geq\frac12\min\{a^{-1},b^{-1}\}$ yields
Theorem~\ref{thm:unconditional}. Inserting
\eqref{eq:third-moment-final} yields
$|\cT_3(E)|\gg_C|E|^{7/2}Q^{-1/2}$ under
\eqref{eq:energy-hypothesis}, which together with
\eqref{eq:intro-elementary} is Theorem~\ref{thm:adaptive}.
\end{proof}

\begin{proof}[Proof of Corollary~\ref{cor:eleven-sixths}]
Let $C_Q$ be an absolute constant for which
Lemma~\ref{lem:distance-energy-bound} gives $Q\leq C_Q|E|^{10/3}$.
Since $|E|\leq p^{3/4}$, we have $|E|^{4/3}\leq p$, and hence
\[
  Q\leq C_Q|E|^{10/3}\leq C_Qp|E|^2.
\]
Theorem~\ref{thm:adaptive} with the absolute parameter
$C=\max\{1,C_Q\}$ gives
\[
  |\cT_3(E)|
  \gg\frac{|E|^{7/2}}{Q^{1/2}}
  \gg |E|^{7/2-5/3}=|E|^{11/6}.
\]
\end{proof}

\subsection{Higher moments}

\begin{proposition}[Higher moments]\label{prop:higher-moments}
Let $p\equiv3\pmod4$ be prime, and let $E\subset\Fp^2$ with
$|E|\geq3$. Then
\begin{align}
  \sum_{g\in\cG}\fall{m_g}{4}
  &\ll |E|^{5}\log|E|+\frac{|E|Q^{2}}{p^{2}},
  \label{eq:hm4}\\
  \sum_{g\in\cG}\fall{m_g}{k}
  &\ll_k |E|^{k+1}+\frac{|E|^{(k-2)/2}Q^{k/2}}{p^{k-2}}
  \qquad(k\geq5).
  \label{eq:hmk}
\end{align}
\end{proposition}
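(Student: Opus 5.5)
The plan mirrors the proof of Proposition~\ref{prop:third-moment}: apply Lemma~\ref{lem:decomposition} with a suitable cutoff $K$, and use Lemma~\ref{lem:char-condition} to guarantee its hypothesis \eqref{eq:char-hypothesis}. Concretely, I would take
\[
  K=L_0\max\left\{1,\ \frac{(|E|Q)^{1/2}}{p}\right\},
\]
with $L_0$ as in \eqref{eq:L0}. Then $K\geq L_0\geq\max\{t_0,2\}$. Since $L_0\geq(2/\cinc)^{1/2}$, this $K$ also satisfies \eqref{eq:K-condition}, so Lemma~\ref{lem:char-condition} yields \eqref{eq:char-hypothesis}. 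Lemma~\ref{lem:decomposition} then gives, for every $k\geq4$,
\[
  \sum_{g\in\cG}\fall{m_g}{k}\leq 2K^{k-2}Q+C_k\Sigma_k(K).
\]

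Now bound the two pieces. Using $\max\{a,b\}^{k-2}\leq a^{k-2}+b^{k-2}$, we get
\[
  K^{k-2}Q\ll_k Q+\frac{|E|^{(k-2)/2}Q^{k/2}}{p^{k-2}}.
\]
For $k=4$ the second term is exactly $|E|Q^2/p^2$, matching \eqref{eq:hm4}. The first term $Q$ is harmless by \eqref{eq:Q-trivial}: $Q\leq|E|^4\leq|E|^5\log|E|$ when $k=4$ (using $|E|\geq3$), and $Q\leq|E|^4\leq|E|^{k+1}$ when $k\geq5$. The high-overlap piece is $\Sigma_4(K)=|E|^5\log|E|$, and $\Sigma_k(K)=|E|^{k+1}$ for $k\geq5$. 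Both are independent of $K$, which is why the cutoff can be taken as small as possible. Adding the two pieces gives \eqref{eq:hm4} and \eqref{eq:hmk}.

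The only point needing care is that the cutoff must be at least the absolute constant $\max\{t_0,2\}$ while still satisfying \eqref{eq:K-condition}; the above choice of $K$ handles both at once. Note also that the low-overlap contribution is bounded crudely by $K^{k-2}$ times the second moment rather than more cleverly. This is exactly what produces the $Q^{k/2}p^{-(k-2)}$ error term. That term is presumably absorbed later, when proving Theorem~\ref{thm:k-points}, either via Lemma~\ref{lem:distance-energy-bound} or via the subfamily cutoff of Lemma~\ref{lem:subfamily-cutoff}, but that absorption is not needed for the proposition itself. I therefore expect no real obstacle here; the proof is bookkeeping on top of Lemmas~\ref{lem:decomposition} and~\ref{lem:char-condition}.
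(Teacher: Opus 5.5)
Your proof is correct and follows the paper's argument: Lemma~\ref{lem:decomposition} with a cutoff certified by Lemma~\ref{lem:char-condition}, followed by $\max\{a,b\}^{k-2}\leq a^{k-2}+b^{k-2}$. The only difference is your cutoff $L_0\max\{1,(|E|Q)^{1/2}/p\}$ in place of the paper's balanced cutoffs \eqref{eq:cutoff} and \eqref{eq:cutoff-k}; since $\Sigma_k(K)$ does not depend on $K$ for $k\geq4$, no balancing is needed, and your low-overlap term $Q\leq|E|^4$ is absorbed just as the paper's $|E|^{k+1}$ term is.
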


\begin{proof}
For $k=4$ we use the cutoff $K$ of \eqref{eq:cutoff}, and for
$k\geq5$ the cutoff
\begin{equation}\label{eq:cutoff-k}
  K=L_0\max\left\{|E|^{\frac{k+1}{k-2}}Q^{-\frac{1}{k-2}},\
  \frac{(|E|Q)^{1/2}}{p}\right\}.
\end{equation}
In both cases $K\geq L_0$; for \eqref{eq:cutoff-k} this follows from
\eqref{eq:Q-trivial}, which gives
\[
  |E|^{\frac{k+1}{k-2}}Q^{-\frac1{k-2}}\geq|E|^{\frac{k-3}{k-2}}\geq1.
\]
Both cutoffs satisfy \eqref{eq:K-condition}, so
Lemmas~\ref{lem:char-condition} and~\ref{lem:decomposition} apply.

For $k=4$, Lemma~\ref{lem:decomposition} gives
$\sum_g\fall{m_g}{4}\leq2K^2Q+C_4|E|^5\log|E|$, and
$\max\{a,b\}^2\leq a^2+b^2$ in \eqref{eq:cutoff} gives
\[
  K^2Q\leq L_0^2\left(|E|^5+\frac{|E|Q^2}{p^2}\right).
\]
This proves \eqref{eq:hm4}. For $k\geq5$,
Lemma~\ref{lem:decomposition} gives
$\sum_g\fall{m_g}{k}\leq2K^{k-2}Q+C_k|E|^{k+1}$, and
$\max\{a,b\}^{k-2}\leq a^{k-2}+b^{k-2}$ in \eqref{eq:cutoff-k} gives
\begin{align*}
  K^{k-2}Q
  &\leq L_0^{k-2}\left(|E|^{k+1}Q^{-1}
  +\frac{(|E|Q)^{(k-2)/2}}{p^{k-2}}\right)Q\\
  &=L_0^{k-2}\left(|E|^{k+1}
  +\frac{|E|^{(k-2)/2}Q^{k/2}}{p^{k-2}}\right).
\end{align*}
This proves \eqref{eq:hmk}.
\end{proof}

\begin{proof}[Proof of Theorem~\ref{thm:k-points}]
For the four-point assertion, $|E|\leq p^{3/4}$
gives $p\geq |E|^{4/3}$ and $Q(E)\ll |E|^{10/3}$. Substitution into \eqref{eq:hm4} gives
\[
  \frac{|E|Q(E)^2}{p^2}
  \ll |E|^{1+20/3-8/3}=|E|^5,
  \qquad
  \sum_{g\in\cG}\fall{m_g}{4}\ll |E|^5\log |E|.
\]
If $|E|\geq8$, Lemma~\ref{lem:orbit-energy} yields
\[
  |\cT_4(E)|\gg
  \frac{|E|^8}{|E|^5\log |E|}
  =\frac{|E|^3}{\log |E|}.
\]
For $4\leq |E|<8$, the same bound holds after adjusting the
implied constant.

For the second assertion, fix $k\geq5$ and set
\[
  \alpha_k=\frac{3(k-2)}{3k-4}.
\]
Let $c_0$ be the constant in Lemma~\ref{lem:subfamily-cutoff}.
Since $\alpha_k<1$, there is a constant $p_0(k)$ such that
$p^{\alpha_k}\leq c_0p$ whenever $p\geq p_0(k)$. If $p<p_0(k)$,
then $|E|$ is bounded in terms of $k$, and the result follows from
$|\cT_k(E)|\geq1$ after adjusting the implied constant. We may
therefore assume that $p\geq p_0(k)$, so that $|E|\leq c_0p$.
\[
  K=C_0\max\{t_0,2,|E|^{3/2}p^{-1/2}\},
\]
where $C_0$ is the constant in
Lemma~\ref{lem:subfamily-cutoff}. Since $|E|\leq p^{\alpha_k}<p$,
Lemma~\ref{lem:distance-energy-bound} gives $Q(E)\ll |E|^{10/3}$.
Lemmas~\ref{lem:subfamily-cutoff} and~\ref{lem:decomposition} give
\[
  \sum_{g\in\cG}\fall{m_g}{k}
  \ll_k K^{k-2}Q(E)+|E|^{k+1}.
\]
Since
\[
  K^{k-2}\ll_k
  1+\frac{|E|^{\frac32(k-2)}}{p^{(k-2)/2}},
\]
we have
\begin{align*}
  K^{k-2}Q(E)
  &\ll_k |E|^{10/3}
  +\frac{|E|^{\frac32(k-2)+\frac{10}{3}}}{p^{(k-2)/2}}\\
  &=|E|^{10/3}
  +\frac{|E|^{(9k+2)/6}}{p^{(k-2)/2}}.
\end{align*}
The assumed range $|E|\leq p^{\alpha_k}$ is equivalent to
\[
  |E|^{(3k-4)/6}\leq p^{(k-2)/2}.
\]
Also, $|E|^{10/3}\leq |E|^{k+1}$ for $k\geq5$, so both terms
in the last display are $O_k(|E|^{k+1})$. Hence
\[
  \sum_{g\in\cG}\fall{m_g}{k}\ll_k |E|^{k+1}.
\]
If $|E|\geq2k$, Lemma~\ref{lem:orbit-energy} now gives
\[
  |\cT_k(E)|\gg_k
  \frac{|E|^{2k}}{|E|^{k+1}}=|E|^{k-1}.
\]
If $k\leq |E|<2k$, the same bound holds after adjusting the
implied constant. Sharpness follows from the progression
\eqref{eq:progression}, for which
$|\cT_k(E)|\leq(2|E|)^{k-1}$.
\end{proof}

\subsection{Sharpness of the third-moment estimate}
\label{subsec:sharpness}

The purpose of this subsection is twofold. We first show that the conditional third-moment estimate \eqref{eq:third-moment-final} is sharp, up to constants, in the energy regime
$Q(E)\asymp |E|^3$. We then compare the rich-motion estimate with the second-moment bound and identify the range in which the incidence input gives an improvement.

Put $N=|E|$ and let $E$ be the progression
\eqref{eq:progression}. Then $Q(E)\asymp N^3$. Every motion with
$m_g\geq2$ maps two distinct points of the horizontal axis to two
distinct points of that axis, and hence maps the axis onto itself. By Lemma~\ref{lem:isometries}, it belongs to one of the four families $z\mapsto\pm z+\tau$ and $z\mapsto\pm\bar z+\tau$,
with $\tau\in\Fp$. Represent $\tau$ by the integer in $(-p/2,p/2)$. For the translations $z\mapsto z+\tau$, the overlap is $N-|\tau|$ when $|\tau|<N$ and zero
otherwise; the condition $2N<p$ rules out wrap-around. In each of the four families, the overlap is nonzero for $O(N)$ values of $\tau$ and is at most $N$. Consequently,
\[
  N^4\ll\sum_{|\tau|\leq N/2}\fall{N-|\tau|}{3}
  \leq\sum_{g\in\cG}\fall{m_g}{3}\ll N^4.
\]
Since $N^{5/2}Q(E)^{1/2}\asymp N^4$ and
$Q(E)\ll N^3\leq pN^2$, the conditional third-moment estimate
\eqref{eq:third-moment-final} is sharp, up to constants, in the
energy regime $Q(E)\asymp N^3$. This example does not determine the optimal bound when $Q(E)$ is closer to $N^{10/3}$, and therefore does not settle the exponent $11/6$ in Corollary~\ref{cor:eleven-sixths}.

The role of the incidence theorem can also be isolated. Without
Lemma~\ref{lem:rich-motions}, the second-moment identity gives only
\[
  \sum_g\fall{m_g}{3}\leq2NQ(E),
  \qquad
  |\cT_3(E)|\gg\frac{N^5}{Q(E)}.
\]
Together with $Q(E)\ll N^{10/3}$, this recovers
$|\cT_3(E)|\gg N^{5/3}$ when $N\leq p^{4/3}$. On the other hand,
the second moment gives $R_t^\pm\ll Q(E)/t^2$, whereas
Lemma~\ref{lem:rich-motions} gives $R_t^\pm\ll N^5/t^4$, subject to its characteristic condition. Up to constants, the latter is
stronger for $t\gg N^{5/2}Q(E)^{-1/2}$, and summing this upper tail yields the term $N^{5/2}Q(E)^{1/2}$. This improves the elementary bound $NQ(E)$ when $Q(E)\gg N^3$. The incidence theorem does not control motions of small overlap, so it cannot replace the energy input; in particular, motions with $m_g=2$ are invisible to a rich-motion estimate.

\section{Open questions}\label{sec:open-questions}

We conclude with two questions suggested by our results.
\begin{question}
How far can the ranges in Corollary~\ref{cor:eleven-sixths}
and Theorem~\ref{thm:k-points} be extended? In particular, what lower bounds hold when
$$
  |E|>p^{3/4}\quad\text{for }k=3,4,
$$
or when
$$
  |E|>p^{3(k-2)/(3k-4)}\quad\text{for }k\geq5?
$$
\end{question}

\begin{question}
Suppose that $p\equiv1\pmod4$. Under what additional geometric
conditions on $E$, such as suitable non-concentration on isotropic
lines, do analogues of Corollary~\ref{cor:eleven-sixths} and
Theorem~\ref{thm:k-points} hold?
\end{question}

\end{document}